\newtheorem{theorem}{Theorem}[section]
\newtheorem{corollary}[theorem]{Corollary}
\newtheorem{lemma}[theorem]{Lemma}
\newtheorem{proposition}[theorem]{Proposition}
\theoremstyle{definition}
\newtheorem{definition}[theorem]{Definition}
\newtheorem{remark}[theorem]{Remark}
\numberwithin{equation}{section}
\let\oldpmod\pmod
\renewcommand{\pmod}[1]{\hspace{-0.1cm}\oldpmod {#1}}
\renewcommand\d{\textnormal{ d}}
\renewcommand\:{\colon}
\begin{document}

\title{Quantitative asymptotics for polynomial patterns in the primes}

\author{Lilian Matthiesen}
\address{Department of Mathematics, KTH, 10044 Stockholm, Sweden}
\email{lilian.matthiesen@math.kth.se}

\author{Joni Ter\"{a}v\"{a}inen}
\address{Department of Mathematics and Statistics, University of Turku, 20014 Turku, Finland}
\email{joni.p.teravainen@gmail.com}

\author{Mengdi Wang}
\address{Department of Mathematics and Statistics, University of Turku, 20014 Turku, Finland}
\email{mengdi.wang@utu.fi}

\begin{abstract} We prove quantitative estimates for averages of the von Mangoldt and M\"obius functions along polynomial progressions $n+P_1(m),\ldots, n+P_k(m)$ for a large class of polynomials $P_i$. The error terms obtained save an arbitrary power of logarithm, matching the classical Siegel--Walfisz error term. These results give the first quantitative bounds for the Tao--Ziegler polynomial patterns in the primes result. 

The proofs are based on a quantitative generalised von Neumann theorem of Peluse, a recent result of Leng on strong bounds for the Gowers uniformity of the primes, and analysis of a ``Siegel model'' for the von Mangoldt function along polynomial progressions.

\end{abstract}

\maketitle

\section{Introduction}

\subsection{M\"obius and von Mangoldt averages along polynomial progressions}

Given $k\in \mathbb{N}$ and polynomials $P_1,\ldots, P_k\in \mathbb{Z}[y]$, a \emph{polynomial progression} is a sequence of the form $n+P_1(m),\ldots, n+P_k(m)$ with $m,n\in \mathbb{Z}$.  In a seminal work, Tao and Ziegler~\cite{tao-ziegler1} proved that the primes contain infinitely many nonconstant polynomial progressions for any distinct polynomials $P_i$ satisfying $P_i(0)=0$ for all $1\leq i\leq k$. This generalised the Green--Tao theorem~\cite{gt} that corresponded to the polynomials $P_i$ being linear.

In a subsequent work, Tao and Ziegler~\cite{tao-ziegler2} went further by showing that, if $d=\max_{1\leq i\leq k}\deg(P_i)$ and $\Lambda$ denotes the von Mangoldt function\footnote{For convenience, we define $\Lambda$ on all of $\mathbb{Z}$ as an even function; thus $\Lambda(n)=\log p$ if $|n|=p^j$ for some prime $p$ and some $j\in \mathbb{N}$, and $\Lambda(n)=0$ otherwise.}, then there is an asymptotic formula for the count
\begin{align}\label{eq:count}
\frac{1}{N^{d+1}}\sum_{n\leq N^d}\sum_{m\leq N}\Lambda(n+P_1(m))\cdots \Lambda(n+P_k(m))     
\end{align}
of polynomial patterns in the primes, provided that $P_i-P_j$ has degree $d$ for all $1\leq i<j\leq k$. This generalised the work of Green, Tao and Ziegler~\cite{gt-linear},~\cite{gt-mobius},~\cite{gtz} that handled the case of linear polynomials $P_i$.

The results in~\cite{tao-ziegler1} are qualitative in nature, as they rely on the Gowers uniformity result for the von Mangoldt function from~\cite{gt-linear} (which is based on the qualitative Green--Tao--Ziegler~\cite{gtz} inverse theorem for the Gowers norms). Subsequently, a quantitative bound for the Gowers norms of the primes was established in~\cite{TT-JEMS} (using a quantitative inverse theorem for the Gowers norms due to Manners~\cite{manners}). Very recently, Leng~\cite{Leng-newII} made a substantial improvement to this result, obtaining arbitrary power of logarithm savings (using the quasipolynomial inverse theorem of Leng, Sah and Sawhney~\cite{LSS}). Leng applied his result in~\cite{Leng-newII} to give an asymptotic formula, with arbitrary power of logarithm savings, for the count~\eqref{eq:count} in the case of linear polynomials.

In the case where the $P_i$ are not all linear polynomials, we are only aware of quantitative results in the case $k\leq 2$ where classical Fourier analysis applies. Zhou~\cite{zhou} handled the case of the polynomials $\{0,y^d\}$ for any $d\in \mathbb{N}$ with savings of an arbitrary power of logarithm. See also~\cite{balestrieri},~\cite{BZ} for results of this type.

When it comes to a qualitative asymptotic formula for~\eqref{eq:count}, it is only known in the aforementioned case when $P_i-P_j$ has degree $d$ for all $1\leq i<j\leq k$ and a few additional cases with $k\leq 4$ (given in~\cite[Theorem 5 and Remark 2]{tao-ziegler2}).

The purpose of this paper is to give a quantitative asymptotic formula for the count~\eqref{eq:count} for a large class of polynomials. We obtain arbitrary power of logarithm savings, as in~\cite{Leng-newII}, which is the best one can hope for in the absence of an improvement to the Siegel--Walfisz theorem\footnote{For example in the case of the polynomials $0,y,2y$, it should be possible (e.g. following the approach in~\cite{TT-JEMS}) to extract the contribution of a Siegel zero to the main term, with the conclusion that any improvement to the error term beyond an arbitrary power of logarithm would lead to an improvement to Siegel's theorem and hence to the Siegel--Walfisz theorem. We will not pursue the details of this, however.}. The class of polynomials we can handle is the same for which qualitative results were obtained in~\cite{tao-ziegler2}.

\begin{theorem}[Quantitative polynomial patterns in the primes]\label{thm_mangoldt1}
Let $k,d\in \mathbb{N}$ be fixed,  and let  $P_1,\ldots, P_k\in \mathbb{Z}[y]$ be fixed. Let $\max_{1\leq i\leq k}\deg(P_i)=d$ and suppose that $\deg(P_i-P_j)=d$ for all $1\leq i<j\leq k$.
Then, for any $N\geq 3$ and $A>0$, we have  
\begin{align}\label{eq:count2}
\frac{1}{N^{d+1}}\sum_{n\leq N^{d}}\sum_{m\leq N}\Lambda(n+P_1(m))\cdots \Lambda(n+P_k(m))=\prod_{p}\beta_p+O_A((\log N)^{-A}),    
\end{align}
where
\begin{align*}
\beta_p\coloneqq \mathbb{E}_{m,n\in \mathbb{Z}/p\mathbb{Z}}\prod_{j=1}^k \Lambda_p(n+P_j(m)),   \end{align*}
and the local von Mangoldt function $\Lambda_p$ is given by
\begin{align}\label{eq:lambdap}
\Lambda_p(n)\coloneqq \frac{p}{p-1}1_{n\not \equiv 0\pmod p}.    
\end{align}
\end{theorem}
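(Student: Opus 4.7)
The plan is to use a ``model + error'' decomposition adapted to the polynomial setting, combining the three ingredients highlighted in the abstract. First, I would choose a ``Siegel model'' approximant $\Lambda^\flat$ for $\Lambda$ on $[1, N^d]$, designed so that $\Lambda^\flat$ reproduces the mean of $\Lambda$ on arithmetic progressions to moduli of size up to some slowly growing quantity $Q = (\log N)^{O(1)}$, including a correction term coming from a possible exceptional real character $\chi$ (Siegel zero). Leng's recent uniformity bound \cite{Leng-newII} then yields
$$\|\Lambda - \Lambda^\flat\|_{U^s[N^d]} \ll_A (\log N)^{-A}$$
for any fixed $A > 0$ and any fixed $s$, so we can choose $s = s(k,d)$ as large as needed in the next step.

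Second, I would telescope $\prod_{i=1}^k \Lambda(n+P_i(m))$ into the main term $\prod_{i=1}^k \Lambda^\flat(n+P_i(m))$ plus $k$ error contributions, each containing exactly one factor of $\Lambda - \Lambda^\flat$. For each error term I would apply Peluse's quantitative generalised von Neumann theorem for polynomial averages: the non-degeneracy hypothesis $\deg(P_i-P_j)=d$ for all $i \neq j$ is precisely what guarantees that her theorem controls the polynomial average $\E_{n,m} \prod_i f_i(n+P_i(m))$ by $\|f_j\|_{U^s}^c$, for some $s=s(k,d)$ and $c>0$, whenever the $f_i$ are $1$-bounded. Since $\Lambda$ and $\Lambda^\flat$ are unbounded, I would first reduce to $1$-bounded functions either via a pseudorandom majorant $\nu$ in the Green--Tao--Ziegler sense or via a restriction-type $L^p$ estimate for the primes; the resulting polylogarithmic loss is negligible against the $(\log N)^{-A}$ savings supplied by Leng's bound, so each error term is $O_A((\log N)^{-A})$.

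Third, I would evaluate the main term $N^{-(d+1)} \sum_{n\leq N^d}\sum_{m\leq N} \prod_{i=1}^k \Lambda^\flat(n+P_i(m))$ essentially by hand. Since $\Lambda^\flat$ is a linear combination of indicator functions of arithmetic progressions to moduli bounded by $Q$, expanding and applying the Chinese Remainder Theorem reduces the double sum, via a standard lattice-point count for the joint residues $(n + P_i(m))_i \pmod{q}$, to a product of local densities, which a routine computation identifies with $\prod_p \beta_p$ up to an $O_A((\log N)^{-A})$ error. The contribution of the exceptional-character piece of $\Lambda^\flat$ is handled separately using Siegel's theorem, which supplies the required $O_A((\log N)^{-A})$ bound.

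The principal obstacle I anticipate is the Siegel-model analysis itself: the simultaneous requirements that $\Lambda^\flat$ admit a strong $U^s$-approximation to $\Lambda$ (from Leng's theorem), that it average correctly along the polynomial progression to reproduce exactly the Euler product $\prod_p \beta_p$, and that it absorb any contribution from a possible Siegel zero. The linear case treated in \cite{Leng-newII} is already delicate in this respect; the polynomial setting adds the further subtlety that one must control the joint distribution of the shifts $n + P_i(m)$ modulo $q$ for $q \leq Q$, which calls for a Weyl-type analysis exploiting the non-degeneracy condition $\deg(P_i-P_j)=d$. A secondary (by now essentially standard) technical point is reconciling Peluse's theorem, typically stated for bounded functions, with the unbounded von Mangoldt function.
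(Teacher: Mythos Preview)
Your outline captures the three main inputs, but it contains a quantitative inconsistency that you sense in your final paragraph without resolving. You want a Siegel model $\Lambda^\flat$ built from residue classes to moduli $Q=(\log N)^{O(1)}$ and simultaneously satisfying $\|\Lambda-\Lambda^\flat\|_{U^s}\ll_A(\log N)^{-A}$. These are incompatible: Leng's bound for the Siegel approximant $\widetilde{\Lambda}_W$ at level $w$ is $\|\Lambda-\widetilde{\Lambda}_W\|_{U^s}\ll\exp(-(\log w)^{c_s})$ for some small $c_s>0$, so saving even one power of $\log N$ already forces $w\geq\exp((\log\log N)^{1/c_s})$, far beyond any fixed power of $\log N$. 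Consequently $\Lambda^\flat$ is not a short linear combination of residue-class indicators, and your ``standard lattice-point count'' for the main term does not go through. The paper evaluates the main term instead via a sieve-type local-to-global principle (expanding $\Lambda_W=\frac{W}{\varphi(W)}\sum_{d\mid W}\mu(d)1_{d\mid\cdot}$, truncating at $d\leq w^{(\log\log N)^2}$, and controlling the tail in $L^k$). Moreover, the Siegel-character piece of the main term is \emph{not} dispatched by Siegel's theorem alone: after Chinese remainder factorisation one faces averages $\mathbb{E}_{a\in\mathbb{Z}/p\mathbb{Z}}\prod_j\widetilde{\chi}_{j,p}(a+P_j(m))$, and the required $p^{-1/2}$ saving comes from the Weil bound for character sums; Siegel's bound enters only afterwards, to convert the resulting $\widetilde{q}^{-c}$ into $(\log N)^{-A}$.

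The paper's treatment of the error terms also differs from your sketch. Rather than a single model, it uses a \emph{hierarchy} $\widetilde{\Lambda}_{W_1},\ldots,\widetilde{\Lambda}_{W_k}$ at levels $w_j=\exp((\log\log N)^{C^{k+1-j}})$ and telescopes position by position: when the error $\Lambda-\widetilde{\Lambda}_{W_{j}}$ sits at position $j$, the model factors to its right (at the much smaller levels $w_{j+1},\ldots,w_k$) are first expanded into residue classes before Peluse's theorem is applied to the remaining $j$ polynomials; the expansion costs $w_{j+1}^{O((\log\log N)^2)}$, which is beaten by the saving $\exp(-(\log w_{j})^{c_s})$ precisely because $w_{j}\gg w_{j+1}$. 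Your idea of normalising by $\log N$ and applying Peluse directly (no expansion, single model) may in fact also succeed under the strong hypothesis $\deg(P_i-P_j)=d$ for all $i\neq j$, since then any position can serve as the distinguished one in Peluse's theorem; but this is not the route the paper takes, and in any case your sketch never reconciles the modulus of $\Lambda^\flat$ with the scale at which Leng's bound actually operates. Incidentally, no pseudorandom majorant or restriction estimate is needed: the pointwise bound $\Lambda,\widetilde{\Lambda}_W\ll\log N$ already makes the functions $1$-bounded after division, and the resulting $(\log N)^{O(k)}$ loss is harmless.
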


\textbf{Remarks.}

\begin{itemize}

    \item By~\cite[Lemma 9.5]{tao-ziegler1}, we have 
\begin{align}\label{eq:betap}
\beta_p=1+O(1/p^2),
\end{align}
since all large enough primes $p$ are ``good'' in the sense of~\cite[Definition 9.4]{tao-ziegler1}. This implies that the product $\prod_p \beta_p$ is convergent. Thus $\prod_p\beta_p>0$ unless there exists a prime $p$ such that for all $m,n\in \mathbb{Z}$ at least one of $n+P_1(m),\ldots, n+P_k(m)$ is divisible by $p$.

\item The result of Tao and Ziegler~\cite{tao-ziegler2} also works for multivariate polynomials. If Proposition~\ref{prop:GVNT3} below could be extended to multivariate polynomials, then it seems likely that Theorem~\ref{thm_mangoldt1} could also be extended to them.  
\end{itemize}

We also prove a similar quantitative bound for averages of the M\"obius function\footnote{For convenience, we define $\mu$ on all of $\mathbb{Z}$ as an even function; thus, $\mu(n)=(-1)^k$ if $|n|$ is the product of $k$ distinct primes and $\mu(n)=0$ if $|n|$ is not squarefree.} $\mu$ along a larger class of polynomial progressions.

\begin{theorem}[Quantitative polynomial patterns with M\"obius weight]\label{thm_mobius}
Let $k,d\in \mathbb{N}$ be fixed,  and let  $P_1,\ldots, P_k\in \mathbb{Z}[y]$ be fixed. Let $\max_{1\leq i\leq k}\deg(P_i)=d$. Suppose that there exists $1\leq \ell\leq k$ such that $\deg(P_{\ell}-P_i)=d$ for all $i\neq \ell$. 
Then, for any $N\geq 3$ and $A>0$, we have  
\begin{align*}
\left|\frac{1}{N^{d+1}}\sum_{n\leq N^d}\sum_{m\leq N}\mu(n+P_1(m))\cdots \mu(n+P_k(m))\right|\ll_A (\log N)^{-A}.    
\end{align*}
\end{theorem}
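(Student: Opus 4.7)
The strategy is to reduce the claim to a Gowers uniformity bound for the Möbius function and then invoke the recent quantitative result of Leng~\cite{Leng-newII}. In contrast to Theorem~\ref{thm_mangoldt1}, no Siegel-type model decomposition is required here: the expected main term vanishes, so it suffices to bound the average in absolute value, and the construction of the Siegel model (which forms the bulk of the work for Theorem~\ref{thm_mangoldt1}) is not needed.

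First, I would apply the quantitative generalised von Neumann theorem of Peluse (Proposition~\ref{prop:GVNT3}) to the tuple of identical factors $(\mu,\ldots,\mu)$. The hypothesis $\deg(P_\ell - P_i) = d$ for all $i \neq \ell$ is exactly the non-degeneracy condition needed to control the polynomial average by the Gowers norm of the $\ell$-th factor. Bounding the remaining $k-1$ factors trivially by $|\mu| \leq 1$, this should yield an inequality of the form
\begin{align*}
\left|\frac{1}{N^{d+1}}\sum_{n \leq N^d}\sum_{m \leq N}\prod_{i=1}^k \mu(n+P_i(m))\right| \ll_{k,d,P_1,\ldots,P_k} \|\mu\|_{U^s[CN^d]}^{\alpha}
\end{align*}
for some $s = s(k,d) \in \mathbb{N}$, $\alpha = \alpha(k,d) > 0$, and $C = C(P_1,\ldots,P_k) > 0$.

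Second, I would invoke the Möbius analogue of Leng's theorem~\cite{Leng-newII}, which states that $\|\mu\|_{U^s[N]} \ll_A (\log N)^{-A}$ for any fixed $s$ and any $A > 0$. This follows from the quasipolynomial Gowers inverse theorem of Leng--Sah--Sawhney~\cite{LSS} combined with the same type of arguments Leng uses for $\Lambda$, together with the orthogonality of $\mu$ to nilsequences. Combining the two estimates yields the claim upon replacing $A$ by a sufficiently large constant multiple.

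The only technical point that requires attention is verifying that Proposition~\ref{prop:GVNT3} is indeed formulated so as to control the average by the Gowers norm of a \emph{single}, distinguished factor under the one-sided non-degeneracy hypothesis appearing in the theorem, rather than requiring the stronger pairwise condition $\deg(P_i - P_j) = d$ for all $i < j$ imposed in Theorem~\ref{thm_mangoldt1}. This should follow from the usual Cauchy--Schwarz-based proof of the generalised von Neumann theorem, which treats the $k$ factors asymmetrically: one performs a sequence of PET-like van der Corput steps pivoted on the distinguished index $\ell$, and only the polynomials $P_\ell - P_i$ need to be of top degree in order for the iterated differencing to leave a controllable Gowers norm on the $\ell$-th factor.
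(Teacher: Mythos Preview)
Your proposal is correct and follows essentially the same route as the paper: the paper deduces Theorem~\ref{thm_mobius} from Theorem~\ref{thm_general}(1), whose proof consists precisely of applying Proposition~\ref{prop:GVNT3} (Peluse's quantitative generalised von Neumann theorem, which is indeed stated under the one-sided condition $\deg(P_k-P_i)=d$ for $i\neq k$) and then invoking Proposition~\ref{prop:leng} (Leng's bound $\|\mu\|_{U^s[N]}\ll_A(\log N)^{-A}$). Your concern in the last paragraph is already addressed by the formulation of Proposition~\ref{prop:GVNT3}.
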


\textbf{Remarks.}
\begin{itemize}
    \item Theorem~\ref{thm_mobius} holds equally well with the Liouville function $\lambda$ in place of the M\"obius function $\mu$. This follows from the same proof along with Remark~\ref{rmk:liouville}.     

    \item Note that in Theorem~\ref{thm_mobius} we can allow some of the polynomials to differ only by a constant (or even to be equal). Thus, for example, if $P_1,\ldots, P_{k-1}\in \mathbb{Z}[y]$ are any polynomials of degree at most $d-1$, then $\{P_1,\ldots, P_{k-1},y^d\}$ is a valid collection for this theorem.  
\end{itemize}

As far as we are aware, Theorem~\ref{thm_mobius} gives the first quantitative bounds for the averages of the M\"obius function along polynomial progressions with non-linear polynomials (and more than two polynomials). Tao and Ziegler stated that their method in~\cite{tao-ziegler2} works for giving qualitative cancellation for averages of $\mu$ along polynomial progressions in the same cases where they handled the von Mangoldt weight. Qualitative cancellation for averages of $\mu$ over general polynomial progressions (with no two polynomials differing by a constant) was subsequently proved in~\cite[Theorem 1.10]{MRTTZ}.  

\subsection{One-dimensional averages}

It turns out that we can give quantitative estimates not only for the double averages~\eqref{eq:count}, but also for the single averages
\begin{align*}
 \sum_{n\leq N^{d}}\Lambda(n+P_1(m))\cdots \Lambda(n+P_k(m))\quad \textnormal{and}\quad     \sum_{m\leq N}\Lambda(n+P_1(m))\cdots \Lambda(n+P_k(m))
\end{align*}
for almost all $m\in [N]$ (respectively, for almost all $n\in [N^d]$). This follows from the following theorem, from which Theorems~\ref{thm_mangoldt1} and~\ref{thm_mobius} will be deduced. 

\begin{theorem}\label{thm_general}
Let $k,d\in \mathbb{N}$ be fixed,  and let  $P_1,\ldots, P_k\in \mathbb{Z}[y]$ be fixed.  Let $\max_{1\leq i\leq k}\deg(P_i)=d$, $N\geq 3$ and $A>0$. 
\begin{enumerate}
    \item Suppose that there exists $1\leq \ell\leq k$ such that $\deg(P_{\ell}-P_i)=d$ for all $i\neq \ell$. Then we have  
\begin{align*}
\frac{1}{N^{d+1}}\sum_{m\leq N}\left|\sum_{n\leq N^{d}}\mu(n+P_1(m))\cdots \mu(n+P_k(m))\right|\ll_A (\log N)^{-A},    
\end{align*}
and
\begin{align*}
\frac{1}{N^{d+1}}\sum_{n\leq N^d}\left|\sum_{m\leq N}\mu(n+P_1(m))\cdots \mu(n+P_k(m))\right|\ll_A (\log N)^{-A}.        
\end{align*}
\item Suppose that $\deg(P_i-P_j)=d$ for all $1\leq i<j\leq k$. Then we have
\begin{align*}
\frac{1}{N^{d+1}}\sum_{m\leq N}\left|\sum_{n\leq N^{d}}\Lambda(n+P_1(m))\cdots \Lambda(n+P_k(m))-\prod_{p}\beta_p(m)\right|\ll_A (\log N)^{-A},    
\end{align*}
and if additionally all the $P_i$ are nonconstant, we have
\begin{align*}
\frac{1}{N^{d+1}}\sum_{n\leq N^d}\left|\sum_{m\leq N}\Lambda(n+P_1(m))\cdots \Lambda(n+P_k(m))-\prod_{p}\beta_p'(n)\right|\ll_A (\log N)^{-A},    
\end{align*}
where
\begin{align}\label{eq:betapm}
\beta_p(m)\coloneqq \mathbb{E}_{n\in \mathbb{Z}/p\mathbb{Z}}\prod_{j=1}^k \Lambda_p(n+P_j(m)),\quad \beta_p'(n)\coloneqq \mathbb{E}_{m\in \mathbb{Z}/p\mathbb{Z}}\prod_{j=1}^k \Lambda_p(n+P_j(m)).   
\end{align}
\end{enumerate}
\end{theorem}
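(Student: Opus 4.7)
My plan is to combine three ingredients: the quantitative generalised von Neumann theorem of Peluse-type for polynomial progressions (Proposition~\ref{prop:GVNT3}), Leng's $(\log N)^{-A}$ Gowers uniformity bound for $\Lambda$ and $\mu$, and a Siegel-type decomposition $\Lambda = \Lambda^\sharp + \Lambda^\flat$ in which $\Lambda^\sharp$ reproduces the correct local densities at each small modulus and $\Lambda^\flat$ has every Gowers norm of size $O_A((\log N)^{-A})$.

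To remove the outer absolute value I would dualise: write $\mathbb{E}_m |Y(m)| = \sup_{\|c\|_\infty \leq 1} \mathbb{E}_m c(m) Y(m)$ and work uniformly in $c$, and analogously in the $n$-average. For Part~(2), insert $\Lambda = \Lambda^\sharp + \Lambda^\flat$ into each factor $\Lambda(n+P_i(m))$ and expand into $2^k$ terms. The purely $\Lambda^\sharp$-term should, via a pointwise local density computation akin to~\cite[\S 9]{tao-ziegler1}, reproduce $\prod_p \beta_p(m)$ (resp.\ $\prod_p \beta_p'(n)$) up to admissible error. Each of the remaining $2^k - 1$ mixed terms carries at least one factor $\Lambda^\flat(n + P_i(m))$; I would apply Proposition~\ref{prop:GVNT3}, using the distinguishing index supplied by the degree hypothesis, to bound each such term by a power of $\|\Lambda^\flat\|_{U^s[N^d]}$ for some $s = s(k,d)$, which is $O_A((\log N)^{-A})$ by Leng. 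For Part~(1) there is no main term, so after dualising one applies Proposition~\ref{prop:GVNT3} directly to $\mathbb{E}_{m,n} c(\cdot) \prod_i \mu(n + P_i(m))$ and invokes Leng's bound on $\|\mu\|_{U^s[N^d]}$.

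The dualising weight $c$ does not a priori fit the polynomial-shift form required by Proposition~\ref{prop:GVNT3}. For the $m$-average I would absorb $c(m)$ by a single preliminary application of Cauchy--Schwarz in $m$, squaring the $k$-linear form into a $2k$-linear one whose polynomial tuple still satisfies the degree hypothesis. For the $n$-average, $c(n) = c(n + 0)$ is incorporated naturally by adjoining the zero polynomial to the tuple; the additional assumption in Part~(2) that the $P_i$ are nonconstant is precisely what ensures that the enlarged family $\{0, P_1, \ldots, P_k\}$ retains the pairwise degree-$d$ difference property, which explains why that hypothesis enters the statement.

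The principal obstacle I anticipate is the \emph{pointwise} identification of the all-$\Lambda^\sharp$ contribution with $\prod_p \beta_p(m)$ or $\prod_p \beta_p'(n)$, uniformly in the outer variable: unlike in the doubly averaged Theorems~\ref{thm_mangoldt1} and~\ref{thm_mobius}, here one cannot smooth the local factors by an outer average, so $\Lambda^\sharp$ must be designed with pointwise local correctness fine enough that its pure product reproduces the right local density at each individual $m$ (or $n$). A secondary difficulty is verifying that the degree hypotheses demanded by Proposition~\ref{prop:GVNT3} remain intact across the dualisation and polynomial-enlargement steps, particularly in the $n$-average of Part~(1) where care is needed if $\deg P_\ell < d$ so that some other index may have to serve as the distinguishing one.
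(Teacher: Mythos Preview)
Your overall strategy for Part~(1) is exactly the paper's: dualise the absolute value via a unimodular weight and apply Proposition~\ref{prop:GVNT3} together with Proposition~\ref{prop:leng}. However, your two workarounds for the dualising weight are unnecessary. Proposition~\ref{prop:GVNT3} already carries \emph{both} a free weight $\theta(m)$ on the $m$-variable and a free weight $f_0(n)$ on the $n$-variable; the paper simply sets $\theta=\theta_1$ or $f_0=\theta_2\cdot 1_{[N^d]}$ and is done. No preliminary Cauchy--Schwarz in $m$ is needed (and squaring would produce a form with two $n$-variables, which no longer matches Proposition~\ref{prop:GVNT3}), nor does one ``adjoin $0$'' to the tuple. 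Relatedly, your explanation of the nonconstancy hypothesis in Part~(2) is off: it is a hypothesis of Proposition~\ref{prop:GVNT3} itself (the $P_i$ there must be nonconstant), not a consequence of enlarging the family by the zero polynomial.

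For Part~(2) your route genuinely differs from the paper's. You propose a \emph{single} decomposition $\Lambda=\Lambda^{\sharp}+\Lambda^{\flat}$ and apply Proposition~\ref{prop:GVNT3} directly to each of the $2^k-1$ mixed terms, treating the normalised $\Lambda^{\sharp}$ as one of the $1$-bounded $f_j$; since $|\Lambda^{\sharp}|\ll \log w$ and $|\Lambda^{\flat}|\ll \log N$, the normalisation loss is only $(\log N)^{O(k)}$, which is killed by $\|\Lambda^{\flat}\|_{U^s}^{1/K}\ll_A(\log N)^{-A}$. The paper instead iteratively peels off one factor at a time with a \emph{tower} of parameters $w_k<w_{k-1}<\cdots<w_1$, then expands each model factor $\widetilde{\Lambda}_{W_j}$ into its divisor sum (truncated at height $w_j^{(\log\log N)^2}$) and splits $m,n$ into residue classes modulo $DQ'$ before invoking Proposition~\ref{prop:GVNT3} on the rescaled polynomials. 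That rescaling produces a loss of $(DQ')^{O(1)}$, which forces the quasipolynomial bound of Proposition~\ref{prop:leng2} and the tower relation $w_{j+1}=\exp((\log w_j)^{1/C})$. Your direct approach avoids this loss entirely, so the weaker bound $\|\Lambda^{\flat}\|_{U^s}\ll_A(\log N)^{-A}$ already suffices for the mixed terms; this is a real simplification.

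The ``principal obstacle'' you flag is exactly the content of Lemma~\ref{le:main} in the paper. It is proved by combining the factorisation result Corollary~\ref{cor:wsieve} (which handles the pure $\Lambda_W$ part pointwise in $m$) with the Weil bound for character sums (which shows that the Siegel-character cross terms are small for all but $\ll_A N(\log N)^{-A}$ values of $m$, using Siegel's lower bound $\widetilde{q}\gg_A(\log N)^A$). Note that one does \emph{not} get a pointwise-in-$m$ identity, only an almost-all result; this is enough since on the exceptional set the integrand is $O((\log w)^k)$ and the set has density $\ll_A(\log N)^{-A}$.
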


Theorem~\ref{thm_general}(1) implies Theorem~\ref{thm_mobius} directly by the triangle inequality. The implication from Theorem~\ref{thm_general}(2) to Theorem~\ref{thm_mangoldt1} is shown in Section~\ref{sec:lambda}. Also in Section~\ref{sec:lambda}, we prove Theorem~\ref{thm_general}(2) after giving an overview of the proof and the role of the Siegel model in the proof.  

As an immediate corollary of Theorem~\ref{thm_general} and Markov's inequality, we have the following almost-all result.

\begin{corollary}[An averaged multidimensional Bateman--Horn result]\label{cor:BH}
Let $k,d\in \mathbb{N}$ be fixed,  and let  $P_1,\ldots, P_k\in \mathbb{Z}[y]$ be fixed. Let $\max_{1\leq i\leq k}\deg(P_i)=d$. Suppose that $\deg(P_i-P_j)=d$ for all $1\leq i<j\leq k$. Let $N\geq 3$ and $A>0$. Then we have
\begin{align*}
\left|\sum_{m\leq N}\Lambda(P_1(m)+n)\cdots \Lambda(P_k(m)+n)-\prod_{p}\beta_p'(n)\right|\ll_A N(\log N)^{-A},    
\end{align*} 
for all but $\ll_A N^d/(\log N)^{A}$ integers $n\in [N^d]$. 
\end{corollary}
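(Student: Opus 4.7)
The plan is to deduce the corollary by a direct application of Markov's inequality to the $n$-averaged half of Theorem~\ref{thm_general}(2), an approach explicitly indicated by the phrase ``immediate corollary'' preceding the statement. First I would apply Theorem~\ref{thm_general}(2) with the exponent $2A$ in place of $A$, obtaining
\begin{align*}
\sum_{n\leq N^d}\left|\sum_{m\leq N}\Lambda(n+P_1(m))\cdots\Lambda(n+P_k(m))-\prod_p\beta_p'(n)\right|\ll_A N^{d+1}(\log N)^{-2A}.
\end{align*}

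Next, I would let $C=C_A$ denote the implied constant above and define the exceptional set $E\subseteq[N^d]$ to consist of those $n$ for which the inner absolute value exceeds $CN(\log N)^{-A}$. Markov's inequality applied to the displayed bound then yields $|E|\leq CN^{d+1}(\log N)^{-2A}/(CN(\log N)^{-A}) = N^d(\log N)^{-A}$, which is the required bound on the size of the exceptional set. For every $n\in[N^d]\setminus E$, the pointwise estimate of the corollary holds by the definition of $E$.

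The deduction is essentially mechanical once Theorem~\ref{thm_general}(2) is in hand, so there is no substantive obstacle. The only item requiring a moment's attention is a minor hypothesis mismatch: the $n$-averaged estimate in Theorem~\ref{thm_general}(2) requires that all $P_i$ be nonconstant, whereas Corollary~\ref{cor:BH} does not explicitly impose this. Under the hypothesis $\deg(P_i-P_j)=d$ for all $i<j$, at most one $P_i$ can be constant, so the corollary should be understood as implicitly carrying this extra assumption; alternatively, in the exceptional case $P_{i_0}\equiv c$, one pulls $\Lambda(n+c)$ out of the inner sum, bounds it trivially by $\log N$ outside the negligible set where $|n+c|$ is a proper prime power, and reduces the problem to the collection $\{P_i\}_{i\neq i_0}$, which still satisfies the degree hypothesis $\deg(P_i-P_j)=d$ for the remaining pairs.
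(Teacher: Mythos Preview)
Your main argument is exactly what the paper intends: the corollary is stated as an ``immediate corollary of Theorem~\ref{thm_general} and Markov's inequality'', and your application of Theorem~\ref{thm_general}(2) with exponent $2A$ followed by Markov's inequality reproduces that deduction verbatim.

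Your observation about the missing ``nonconstant'' hypothesis is sharp, and option (a) --- reading the hypothesis into the corollary --- is the right resolution. However, your alternative workaround (b) does not go through. If $P_{i_0}\equiv c$, then for any $n$ with $|n+c|\geq 2$ the product $\prod_p\beta_p'(n)$ vanishes, since the factor $\Lambda_p(n+c)$ is zero for every prime $p\mid n+c$. On the other hand, when $n+c$ is prime the left-hand side equals $\Lambda(n+c)\sum_{m\leq N}\prod_{j\neq i_0}\Lambda(n+P_j(m))$, which (by the very estimate you would apply to the reduced collection) is of order $N\log N$ for typical such $n$. Since there are $\asymp N^d/\log N$ integers $n\in[N^d]$ with $n+c$ prime, the asserted bound fails on a set much larger than $N^d(\log N)^{-A}$. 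So the nonconstant assumption is genuinely needed, not merely a technical convenience, and the corollary should be understood as inheriting it from Theorem~\ref{thm_general}(2).
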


The Bateman--Horn conjecture~\cite{BH} predicts that the same asymptotic holds for all $n\in [N^d]$; thus, Corollary~\ref{cor:BH} can be seen as an averaged version of it. Specialising to $k=1$, Corollary~\ref{cor:BH} extends an earlier result of Zhou~\cite{zhou}, which handled the case where $P_1(y)=y^{\ell}$ for some $\ell \in \mathbb{N}$.

\subsection{Acknowledgements} 
LM was supported by the Swedish Research Council (VR) grant
2021-04526.
JT was supported by funding from European Union's Horizon
Europe research and innovation programme under Marie Sk\l{}odowska-Curie grant agreement no.
101058904. MW was supported by the Academy of Finland grant no. 333707. Part of this material is based upon work supported by the Swedish Research Council under grant no. 2021-06594 while the authors were in residence at Institut Mittag-Leffler in Djursholm, Sweden, during Spring 2024.

\section{Notation and preliminaries}\label{sec:notation}

\subsection{Asymptotic and averaging notation}

We use the usual Landau and Vinogradov asymptotic notations $O(\cdot), o(\cdot), \ll, \gg$. Thus, we write $X\ll Y$, $X=O(Y)$ or $Y\gg X$ if we have $|X|\leq CY$ for some constant $C$. We write $X\asymp Y$ if $X\ll Y\ll X$. We write $X=o(Y)$ as $N\to \infty$ if $|X|\leq c(N)Y$ for some function $c(N)\to 0$ as $N\to \infty$. If we attach subscripts to these notations, then the implied constants may depend on these subscripts. Thus, for example,  $X\ll_A Y$ signifies that $|X|\leq C_AY$ for some $C_A>0$ depending on $A$. 

For a set $S$, we write $1_{S}(x)$ for its indicator function, i.e. a function that equals to $1$ for $x\in S$ and equals to $0$ otherwise. Similarly, if $P$ is a proposition, the quantity $1_{P}$ equals to $1$ if $P$ is true and $0$ if $P$ is false. 

For a nonempty finite set $A$ and a function $f\:A\to \mathbb{C}$, we use the averaging notation
\begin{align*}
\mathbb{E}_{a\in A}f(a)\coloneqq \frac{1}{|A|}\sum_{a\in A}f(a).    
\end{align*} 

For a real number $N>0$, we denote $[N]\coloneqq [1,N]\cap \mathbb{N}$. For $q\in \mathbb{Z}$, we also denote $q[N]\coloneqq \{qn\colon \,\, n\in [1,N]\cap \mathbb{N}\}$. 

We write $\|x\|$ for the distance from the real number $x$ to the nearest integer. We use the standard notation $e(x)\coloneqq e^{2\pi i x}$.

Given integers $m,n$, we write $(m,n)$ for their greatest common divisor and $[m,n]$ for their least common multiple. We use $\tau(n)$ to denote the divisor function and $\Omega(n), \omega(n)$ to denote the number of prime divisors of $n$, with and without multiplicities, respectively.

\subsection{Gowers norms}

For $f\: \mathbb{Z}\to \mathbb{C}$ with finite support and $s\in \mathbb{N}$, we define the unnormalised $U^s$ Gowers norm of $f$ as
\begin{align*}
\|f\|_{\widetilde{U}^s(\mathbb{Z})}\coloneqq\left(\sum_{x,h_1,\ldots, h_s\in \mathbb{Z}}\prod_{\omega\in \{0,1\}^s}\mathcal{C}^{|\omega|}f(x+\omega\cdot (h_1,\ldots, h_s))\right)^{1/2^s},   
\end{align*}
where $\mathcal{C}(z)=\overline{z}$ is the complex conjugation operator and for a vector $(\omega_1, \ldots, \omega_s)$ we write $|\omega|\coloneqq |\omega_1| +\cdots+ |\omega_s|$. 
For an interval $I\subset \mathbb{R}$ with $|I|\geq 1$, we further define the $U^s(I)$ Gowers norm of a function $f\:\mathbb{Z}\to \mathbb{C}$ as
\begin{align*}
\|f\|_{U^s(I)}\coloneqq \frac{\|f1_{I}\|_{\widetilde{U}^s(\mathbb{Z})}}{\|1_{I}\|_{\widetilde{U}^s(\mathbb{Z})}}.    
\end{align*}
It is well known (see for example~\cite[Appendix B]{gt-linear}) that for $s\geq 2$ the $U^s(I)$ norm is indeed a norm and for $s=1$ it is a seminorm. When $I=[N_1,N_2]$, we abbreviate $\|f\|_{U^s(I)}=\|f\|_{U^s[N_1,N_2]}$ and if $N_1=1$ we further abbreviate this as $\|f\|_{U^s[N_2]}$.  

The $\widetilde{U}^s(\mathbb{Z})$ norms obey the Gowers--Cauchy--Schwarz inequality (see for example~\cite[(4.2)]{TT-JEMS}), which states that, for any functions $(f_{\omega})_{\omega\in \{0,1\}^s}$ from $\mathbb{Z}$ to $\mathbb{C}$ with finite support, we have
\begin{align}\label{eq:gcs}
\left|\sum_{x,h_1,\ldots, h_s\in \mathbb{Z}}\prod_{\omega\in \{0,1\}^s}f_{\omega}(x+\omega\cdot (h_1,\ldots, h_s))\right|\leq \prod_{\omega\in \{0,1\}^s}\|f_{\omega}\|_{\widetilde{U}^s(\mathbb{Z})}.  
\end{align}

For  $H\geq 1$, we define the weight
\begin{align}\label{eq:muH}
\mu_H(h) \coloneqq \frac{ |\{(h_1,h_2)\in [H]^2\: h_1-h_2=h\}|}{\lfloor H\rfloor ^2},    
\end{align}
where $\lfloor x\rfloor$ denotes the floor of $x$. This weight will appear naturally when applying the Cauchy--Schwarz inequality to polynomial averages.

\subsection{Vinogradov's Fourier expansion}

We will make use of a Fourier approximation for the indicator function of an interval that is due to Vinogradov.

\begin{lemma}\label{le:vinogradov} For any $-1/2< \alpha<\beta< 1/2$ and $\eta\in  (0,\min\{1/2-\|\alpha\|,1/2-\|\beta\|,\|\alpha-\beta\|/2\})$, there exists a $1$-periodic function $g\:\mathbb{R}\to [0,1]$ such that the following hold.
\begin{enumerate}
    \item $g(x)=1$ for $x\in [\alpha+\eta, \beta-\eta]$ and $g(x)=0$ for $x\in [-1/2,1/2]\setminus [\alpha-\eta,\beta+\eta]$.

    \item For some $|c_j|\leq 10\eta$, we have for all $x\in \mathbb{R}$ the Fourier expansion
    \begin{align*}
     g(x)=\beta-\alpha-\eta+\sum_{|j|>0}c_je(jx).   
    \end{align*}
    \item For any $K\geq 1$, we have
    \begin{align*}
     \sum_{|j|>K}|c_j|\leq \frac{10\eta^{-1}}{K}.   
    \end{align*}
\end{enumerate}
\end{lemma}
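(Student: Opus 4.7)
The plan is to realise $g$ as the $1$-periodisation of a convolution of an indicator with a short box kernel, which is Vinogradov's classical smoothing construction. Concretely, I would set
$$\kappa(x)\coloneqq \frac{1}{2\eta}1_{[-\eta,\eta]}(x),\qquad f\coloneqq 1_{[\alpha,\beta]}\ast \kappa,\qquad g(x)\coloneqq \sum_{m\in \mathbb{Z}} f(x-m),$$
and first verify (1) by direct computation: from $f(x)=\tfrac{1}{2\eta}|[x-\eta,x+\eta]\cap[\alpha,\beta]|$ one reads off $0\le f\le 1$, $f=1$ on $[\alpha+\eta,\beta-\eta]$, and $f=0$ outside $[\alpha-\eta,\beta+\eta]$. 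The quantitative hypothesis $\eta<\min(\tfrac12-\|\alpha\|,\tfrac12-\|\beta\|,\tfrac{\|\alpha-\beta\|}{2})$ ensures that the support of $f$ is contained in a single period of length $1$, so no wrap-around occurs in the periodisation and $g$ inherits the same pointwise properties on $[-1/2,1/2]$. A trivial endpoint adjustment of the inner indicator (e.g.\ replacing $1_{[\alpha,\beta]}$ by $1_{[\alpha,\beta-\eta]}$) can be used to produce exactly the constant term $\beta-\alpha-\eta$ in (2) without altering any of the bounds derived below.

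Next I would compute the Fourier coefficients via the convolution identity $\hat g(j)=\hat 1_{[\alpha,\beta]}(j)\,\hat\kappa(j)$. For $j\neq 0$ the explicit formulae
$$\hat 1_{[\alpha,\beta]}(j)=\frac{e(-j\alpha)-e(-j\beta)}{2\pi i j},\qquad \hat\kappa(j)=\frac{\sin(2\pi j\eta)}{2\pi j\eta}$$
give the pointwise bound
$$|c_j|\le \min\!\left(\frac{1}{\pi|j|},\ \frac{1}{2\pi^{2} j^{2}\eta}\right).$$
The first factor supplies decay in $j$ coming from the sharp boundary of $[\alpha,\beta]$, while the second factor supplies the crucial extra factor of $(|j|\eta)^{-1}$ coming from the smoothing kernel $\kappa$; combined, they yield (2).

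Finally, part (3) follows by summing the second estimate over $|j|>K$:
$$\sum_{|j|>K}|c_j|\le \frac{1}{\pi^{2}\eta}\sum_{j>K}\frac{1}{j^{2}}\le \frac{1}{\pi^{2}K\eta}\le \frac{10\eta^{-1}}{K}.$$

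I do not foresee any substantive obstacle: this is a textbook Vinogradov-type smoothing. The only points of bookkeeping are matching the precise numerical constants in (2), handled by the endpoint tweak noted above, and verifying that the periodisation does not wrap, which is immediate from the hypothesis on $\eta$.
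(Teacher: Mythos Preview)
The paper does not give a proof; it simply cites Vinogradov's book (Chapter~1, Lemma~12, with $r=1$, $\Delta=2\eta$). Your construction---the periodised convolution of an indicator with a box kernel---is exactly Vinogradov's construction in the case $r=1$, so the approach is the same as what is being cited.

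Two steps in your write-up do not go through as written, however. First, the ``trivial endpoint adjustment'' breaks property~(1): if you convolve $1_{[\alpha,\beta-\eta]}$ with $\tfrac{1}{2\eta}1_{[-\eta,\eta]}$, the plateau where $f=1$ shrinks to $[\alpha+\eta,\beta-2\eta]$, and indeed $f(\beta-\eta)=1/2$. To get $c_0=\beta-\alpha-\eta$ while preserving~(1) you must also halve the kernel width, e.g.\ take $f=1_{[\alpha+\eta/2,\,\beta-\eta/2]}\ast \tfrac{1}{\eta}1_{[-\eta/2,\eta/2]}$; then $f=1$ on $[\alpha+\eta,\beta-\eta]$, $f=0$ outside $[\alpha,\beta]$, and $c_0=\beta-\alpha-\eta$.

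Second, your claim that the bound $|c_j|\le\min\bigl(\tfrac{1}{\pi|j|},\tfrac{1}{2\pi^2 j^2\eta}\bigr)$ ``yields~(2)'' is not correct: for $j=1$ it gives only $|c_1|\le 1/\pi$, which is not $\le 10\eta$ when $\eta$ is small. In fact no $g$ satisfying~(1) can have $|c_1|\le 10\eta$ in general, since~(1) forces $\|g-1_{[\alpha,\beta]}\|_{L^1}\le 4\eta$ and hence $|c_1|\ge |\widehat{1_{[\alpha,\beta]}}(1)|-4\eta$, which is bounded away from~$0$ when $\beta-\alpha$ is. So the bound in~(2) as printed appears to be a slip; what Vinogradov's lemma actually delivers, and what the paper's applications use, is $|c_j|\le\min\bigl(\beta-\alpha,\tfrac{1}{\pi|j|}\bigr)$, and your argument does establish this (the first option being the trivial bound $|c_j|\le\int g$). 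Your verification of~(3) is fine.
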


\begin{proof}
See~\cite[Chapter 1, Lemma 12]{vinogradov} (with $r=1$, $\Delta=2\eta$, $(\alpha,\beta)\to (\alpha+\eta,\beta-\eta)$ there).   
\end{proof}

\section{Gowers norm control of polynomial averages}

For the proofs of the main theorems, 
we need to know that weighted polynomial averages (with maximal degree $d$) are controlled in terms of the $U^s[CN^d]$ norm of one of the weights for some constant $C$, assuming that the corresponding polynomials $P_i$ satisfy the assumption of Theorem~\ref{thm_general}. This follows from a result that slightly extends the work of Peluse~\cite{Peluse-FMP}. 

\begin{proposition}\label{prop:GVNT3}
Let $k,d\in \mathbb{N}$, $C\geq 2$ and let $P_1,\ldots, P_k\in \mathbb{Z}[y]$ be nonconstant polynomials satisfying $\deg P_1\leq \ldots \leq \deg P_k=d$. Suppose that $\deg(P_k-P_i)=d$ for all $1\leq i\leq k-1$.
Let $c_i$ be the leading coefficient of $P_i$, and suppose that all of the coefficients of $P_1,\ldots, P_k$ are at most $C|c_k|$ in modulus. Suppose also that $|c_i|\geq |c_k|/C$ whenever $\deg(P_i)=d$ and that $|c_k|\leq N^{\eta}$ with $\eta>0$ small enough in terms of $d$.

Let $N\geq 1$, and let $f_0,\ldots, f_k\: \mathbb{Z}\to \mathbb{C}$ be functions supported on $[-CN^d,CN^d]$ with $|f_i|\leq 1$ for all $0\leq i\leq k$. Also let $\theta\colon [N]\to \mathbb{C}$ satisfy $|\theta|\leq 1$. Then, for some natural number $s\ll_{d,k} 1$ and some $1\leq K\ll_{d,k}1$, we have
\begin{align}\label{eq:GVNT3}\begin{split}
\left|\frac{1}{N^{d+1}}\sum_{n\in \mathbb{Z},m\in [N]}\theta(m)f_0(n)\prod_{j=1}^k f_j(n+P_j(m))\right|\ll_{C,d,k} |c_k|^{K}\left(N^{-1}+\|f_k\|_{U^s[-CN^d,CN^d]}\right)^{1/K}.   
\end{split}
\end{align}
\end{proposition}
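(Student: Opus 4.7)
The plan is to adapt the PET + Cauchy--Schwarz scheme of \cite{Peluse-FMP}, with two extensions tailored to our setting: tracking the dependence on the leading coefficient $c_k$ so that it appears only as a polynomial prefactor $|c_k|^{K}$; and converting the progression- or polynomial-Gowers norm that appears at the end of PET into the standard interval norm $\|f_k\|_{U^s[-CN^d,CN^d]}$. The first step is to apply Cauchy--Schwarz in the $n$-variable to eliminate $f_0$, bounding the square of the left-hand side of \eqref{eq:GVNT3} by
\begin{equation*}
\frac{O_C(1)}{N^{d+2}} \sum_{|n|\le CN^d} \Bigl|\sum_{m\in[N]} \theta(m)\prod_{j=1}^k f_j(n+P_j(m))\Bigr|^2.
\end{equation*}
Expanding the square and substituting $m_2=m+h$ introduces the weight $\mu_N(h)$ of \eqref{eq:muH} and replaces each $f_j$ by a pair of factors shifted by $P_j(m)$ and $P_j(m+h)$. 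Under the hypothesis $\deg(P_k-P_i)=d$ for $i<k$, the polynomial $P_k(m+h)-P_k(m)$, of degree $d-1$ in $m$ with leading coefficient $d c_k h$, strictly dominates the analogous differences $P_i(m+h)-P_i(m)$ in the new system, in the Bergelson--Leibman ordering.

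The second step is the PET induction: iterate this Cauchy--Schwarz-in-$m$ procedure, choosing at each stage to difference in a way that strictly reduces the PET weight vector (the ordered multiset of degrees) of the polynomial system. This terminates after $O_{d,k}(1)$ rounds in a configuration where the top polynomial in $m$ has been reduced to linear and one distinguished copy of $f_k$ remains with argument linear in $m$ and in the accumulated shifts $h_1,\ldots,h_s$. A final application of the Gowers--Cauchy--Schwarz inequality \eqref{eq:gcs} to isolate this copy bounds the resulting multilinear average by a Gowers norm $\|f_k\|_{U^s}$ for some $s=s(d,k)$, evaluated along an arithmetic progression of common difference $q$, where $q$ is a monomial in $c_k$ and the $h_i$'s of total degree $O_{d,k}(1)$. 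Writing this in terms of $\|f_k\|_{U^s[-C'N^d,C'N^d]}$ on a slightly enlarged interval costs a factor $|q|^{O(1)}\ll |c_k|^{O_{d,k}(1)}\cdot N^{O_{d,k}(1)\cdot\eta}$ by a standard covering / Peluse-style degree-lowering argument; the latter combined with the error term $N^{-1}$ arising from the translation between polynomial-image averages and the interval Gowers norm yields the bound in \eqref{eq:GVNT3}.

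The main obstacle I anticipate is the bookkeeping needed to ensure that the aggregated losses through the PET iterations appear as powers of $|c_k|$ and not of $N$: each Cauchy--Schwarz step multiplies the leading coefficient of the relevant difference polynomial by a new shift variable $h_i \in [-N,N]$, and one must verify that these $h_i$'s are absorbed into the $\mu_N$-averaging rather than into the final prefactor, whereas the factors of $c_k$ itself do accumulate to $|c_k|^K$. A secondary technical point is to run PET under the asymmetric hypothesis $\deg(P_k-P_i)=d$ alone (rather than the fully symmetric $\deg(P_i-P_j)=d$ for all $i<j$), which forces the differencing at every stage to be taken against $P_k$; this is the feature that \emph{slightly extends} \cite{Peluse-FMP}. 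The weight $\theta(m)$ with $|\theta|\le 1$ survives all Cauchy--Schwarz steps unchanged and contributes nothing beyond trivial losses.
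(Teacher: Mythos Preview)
Your proposal takes a genuinely different route from the paper, and it has a real gap at the key step.

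The paper does \emph{not} redo the PET induction. Instead it (i) eliminates $\theta$ at the outset by a single Cauchy--Schwarz in the $m$-variable, which replaces each $f_j$ by $\Delta_h f_j$ and reduces to the case $\theta\equiv 1$; then (ii) for $\theta\equiv 1$ it quotes Peluse's Theorem~6.1 in \cite{Peluse-FMP} as a black box to obtain a bound in terms of a \emph{Gowers box norm}
\[
\|f_k\|_{\square^{s}_{Q_1,\ldots,Q_s}([C^2|c_k|N^d])},\qquad Q_i=r_i[\delta' N^d],
\]
where the moduli $r_i$ are of the form $c_k$ or $c_k-c_i$ (nonzero precisely because $\deg(P_k-P_i)=d$); and finally (iii) converts this box norm to the interval Gowers norm $\|f_k\|_{U^s[-CN^d,CN^d]}$ by Fourier-expanding the congruence conditions $r_i\mid h_i$, Fourier-expanding the short-range indicators $1_{[-r_i\delta' N^d,r_i\delta' N^d]}(h_i)$ via Lemma~\ref{le:vinogradov}, and then applying Gowers--Cauchy--Schwarz \eqref{eq:gcs} together with the phase-invariance $\|f_k e(\alpha\cdot)\|_{U^s}=\|f_k\|_{U^s}$.

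Your proposal, by contrast, reruns PET from scratch and then asserts that the endpoint is a Gowers norm along an arithmetic progression which can be traded for the interval norm by ``a standard covering / Peluse-style degree-lowering argument''. This is where the gap is. First, the output of PET here is a box norm with several independent moduli $r_1,\ldots,r_s$ and with the differencing parameters constrained to short ranges $|h_i|\le \delta' N^d$; it is not a single $U^s$-norm along one progression. Second, the passage from that box norm to $\|f_k\|_{U^s[-CN^d,CN^d]}$ is not degree-lowering at all (degree-lowering in Peluse's sense reduces the $s$ in $U^s$, which is a different mechanism); rather it is the Fourier-analytic step (iii) above, and it is precisely this step that produces the polynomial loss $|c_k|^K$ you are trying to track, since the $r_i$ are $O(|c_k|)$ and enter through the character sums and the length of the trigonometric approximation. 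Without spelling this out, the bookkeeping you flag as the ``main obstacle'' cannot be closed.

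A smaller point: your treatment of $\theta$ is also different. You keep $\theta$ through PET, relying on $|\theta|\le 1$; this can be made to work but multiplies the number of factors at every van der Corput step. The paper's device of removing $\theta$ first by Cauchy--Schwarz in $m$, then invoking the $\theta\equiv 1$ case, is cleaner and costs only replacing $s$ by $s+1$ and $K$ by $2K$.
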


\begin{remark}
Compared with~\cite[Theorem 6.1]{Peluse-FMP}, we have an additional weight $\theta$ on the $m$ variable and we bound the averages in terms of the Gowers uniformity norms rather than Gowers box norms.     
\end{remark}

\begin{proof} \textbf{Reduction to the case $\theta\equiv 1$.} We first reduce matters to proving the case $\theta\equiv 1$. Suppose that this case has been proved, and consider the general case. Let $S$ denote the left-hand side of~\eqref{eq:GVNT3}. Then by the Cauchy--Schwarz inequality we have
\begin{align*}
|S|^2\leq \frac{1}{N^{2d+1}}\sum_{m\in [N]}\left|\sum_{n\in \mathbb{Z}}f_0(n)\prod_{j=1}^k f_j(n+P_j(m))\right|^2.   
\end{align*}
Expanding out the square, the right-hand side becomes
\begin{align*}
\frac{1}{N^{2d+1}}\sum_{|h|\leq 2CN^d}\sum_{m\in [N]}\sum_{n\in \mathbb{Z}}\Delta_hf_0(n)\prod_{j=1}^k \Delta_hf_j(n+P_j(m)),
\end{align*}
where $\Delta_hf(x)\coloneqq \overline{f(x+h)}f(x)$. Applying the case $\theta\equiv 1$ to the inner sums here, we see that for some $1\leq s,K\ll_{d,k}1$ we have
\begin{align*}
|S|^2\ll_{C,d,k}  \frac{|c_k|^{K}}{N^{d}}\sum_{|h|\leq 2CN^d}\left(N^{-1}+\|\Delta_hf_k\|_{U^s[-CN^d,CN^d]}\right)^{1/K}.   
\end{align*}
Using $(a+b)^{1/K}\leq a^{1/K}+b^{1/K}$ for $a,b\geq 0$ and H\"older's inequality, we see that
\begin{align*}
|S|^2\ll_{C,d,k}  |c_k|^{K}\left(N^{-1/K}+\left(\frac{1}{N^d}\sum_{|h|\leq 2CN^d}\|\Delta_hf_k\|_{U^s[-CN^d,CN^d]}^{2^s}\right)^{1/(2^sK)}\right).
\end{align*}
Expanding out the definition of the $U^s[-CN^d,CN^d]$ norm, we conclude that
\begin{align*}
|S|^2\ll_{C,d,k}  |c_k|^{K}\left(N^{-1/K}+\|f_k\|_{U^{s+1}[-CN^d,CN^d]}^{2/K}\right).    
\end{align*}
Now the claim~\eqref{eq:GVNT3} follows (with $s$ replaced by $s+1$ and $K$ by $2K$) from the power mean inequality $(a^{1/K}+b^{1/K})/2\leq ((a+b)/2)^{1/K}$ for $a,b\geq 0$. 

\textbf{Proof of the case $\theta\equiv 1$.} We may also assume that $C$ is large enough in terms of $d,k$. 

We are going to apply~\cite[Theorem 6.1]{Peluse-FMP}. Let $s\ll_{d,k}1$ be as in that theorem (we may assume that $s\geq 2$). Also let $B=B(d,k)$ be large enough.
Let 
\begin{align}\label{eq:deltadef}
\delta\coloneqq \frac{1}{C^2|c_k|N^{d+1}}\sum_{n\in \mathbb{Z},m\in [N]}f_0(n)\prod_{j=1}^k f_j(n+P_j(m)).    
\end{align}

We may suppose that
\begin{align*}
N\geq B\left(\frac{|c_k|}{\delta^B}\right)^{B},    
\end{align*}
since otherwise solving this inequality for $\delta$ the claim of the proposition follows immediately. Since $|f_j|\leq 1$, we also have $\delta\leq 1/C^2$.

Now we apply~\cite[Theorem 6.1]{Peluse-FMP}\footnote{That theorem is stated for functions supported on intervals of the form $[X]$, but the same proof works for functions supported on an interval of the form $[-X,X]$.} with $(M,N,C)\to (N,C^2|c_k|N^d,C^{3})$ there (noting that the assumptions of that theorem are satisfied; in particular $|c_i|N^d/(C^2|c_k|N^{d})\in [C^{-3},C^3]$ for all $i$). Recalling the definition of the Gowers box norms from~\cite[Section 2]{Peluse-FMP}, we conclude that for some $t=t(d,k)$ (which may be assumed to be much smaller than $B$) and for $\delta'= \delta^{t}$  we have
\begin{align}\label{eq:square}
\|f_k\|_{\square^{s}_{Q_1,\ldots, Q_s}([C^2|c_k|N^d])}\gg_{C,d,k} \delta',
\end{align}
where each $Q_i$ equals to $r_i[\delta'N^d]\subset r_i\mathbb{Z}$ for some integer\footnote{The numbers $r_i$ are of the form $c_k$ or $c_k-c_i$ for some $i$ with $\deg P_i=d$; crucially, these numbers cannot be zero by our assumption on $P_i$.} $0<|r_i|\leq 2d!C|c_k|$. To simplify notation, we assume that all $r_i$ are positive. Unwrapping the definition of the box norm,~\eqref{eq:square} yields
\begin{align*}
&\Bigg|\frac{1}{C^2|c_k|N^{d(2s+1)}}\sum_{n,h_1^{(0)},h_1^{(1)},\ldots, h_s^{(0)},h_s^{(1)}\in \mathbb{Z}}\prod_{j=1}^{s}1_{h_j^{(0)},h_j^{(1)}\in [r_j\delta'N^d]\cap r_j\mathbb{Z}}\\
&\quad \quad\cdot \prod_{\omega\in \{0,1\}^s}\mathcal{C}^{|\omega|}f_k(n+h_1^{(\omega_1)}+\cdots +h_k^{(\omega_k)})\Bigg|\\
&\quad \gg_{C,d,k} (\delta')^{2^s}. 
\end{align*}
Making the changes of variables $n'=n-(h_1^{(0)}+\cdots +h_k^{(0)})$, $h_j=h_j^{(1)}-h_j^{(0)}$ and using the notation~\eqref{eq:muH} and the trivial bound $1\geq 1/(C^2|c_k|)$, we see that
\begin{align}\label{eq:deltaB}\begin{split}
&\Bigg|\frac{1}{N^{d(s+1)}}\sum_{\substack{n'\in [-CN^d,CN^d]\\h_1,\ldots, h_s\in [-CN^d,CN^d]}}\prod_{j=1}^{s}1_{h_j\equiv 0\pmod{r_j}}\delta'N^d\mu_{\delta'N^d}\left(\frac{h_j}{r_j}\right)\\
&\quad \quad \cdot\prod_{\omega\in \{0,1\}^s}\mathcal{C}^{|\omega|}f_k(n'+\omega\cdot (h_1,\ldots, h_s))\Bigg|\\
&\quad \gg_{C,d,k} (\delta')^{2^s}. 
\end{split}
\end{align}

By the orthogonality of characters, we have 
\begin{align}\label{eq:ortho}
1_{h_j\equiv 0\pmod{r_j}}=\frac{1}{r_j}\sum_{1\leq a\leq r_j}e\left(\frac{ah_j}{r_j}\right).
\end{align}
Moreover, for any $H\geq 1$ and $x\in [-H,H]$ we have the Fourier expansion
\begin{align}\label{eq:muexpansion}
H\mu_{H}(x)&=1-\left|\frac{x}{H}\right|=\frac{1}{2}-\frac{2}{\pi^2}\sum_{m\equiv 1\pmod 2}\frac{1}{m^2}e\left(\frac{x}{2H}m\right).
\end{align}
Inserting~\eqref{eq:ortho},~\eqref{eq:muexpansion} into~\eqref{eq:deltaB} and applying the triangle inequality, for some $\alpha_1,\ldots, \alpha_s\in \mathbb{R}$ we obtain
\begin{align}\label{eq:rj}\begin{split}
&\left|\frac{1}{N^{d(s+1)}}\sum_{n,h_1,\ldots, h_s\in [-2CN^d,2CN^d]}\prod_{j=1}^{s}1_{[-r_j\delta'N^d, r_j\delta'N^d]}(h_j)e(\alpha_j h_j)\right.\\
&\quad\quad  \left.\cdot \prod_{\omega\in \{0,1\}^s}\mathcal{C}^{|\omega|}f_k1_{[-CN^d,CN^d]}(n+\omega\cdot \mathbf{h})\right|\\
&\quad \gg_{C,d,k} (\delta')^{2^s}. 
\end{split}
\end{align}

Observe that since $\delta\leq 1/C^2$ and $C$ is large enough in terms of $d,k$, we have $r_j\delta'\leq 1/2$ for all $1\leq j\leq s$. Note then that for any $c\in (0,1)$ and $h\in [-2CN^d,2CN^d]$ we have 
$$1_{[-cN^d,cN^d]}(h)=1_{\|h/(4CN^d)\|\leq c/(4C)}.$$
Hence, by Vinogradov's Fourier expansion (Lemma~\ref{le:vinogradov}), for any $1\leq j\leq s$ there exists a trigonometric polynomial $T_j(x)=\sum_{0\leq |m|\leq J}\gamma_{m,j}e(mx/(4CN^d))$ with $J=O_C((\delta')^{-10\cdot 2^s})$ such that we have $|\gamma_{m,j}|\ll_C \delta'$ for all $m$ and 
\begin{align}\label{eq:Tj}
\sum_{h\in [-2CN^d,2CN^d]}|1_{[-r_j\delta'N^d, r_j\delta'N^d]}(h)-T_j(h)|\leq (\delta')^{5\cdot 2^s}.    
\end{align}
Replacing $1_{[-r_j\delta'N^d,r_j\delta'N^d]}(h_j)$ with $T_j(h_j)$ in~\eqref{eq:rj}, and crudely using the triangle inequality, the $1$-boundedness of $f_k$ and~\eqref{eq:Tj} to estimate the error from this replacement, we see that   
\begin{align*}
&\left|\frac{1}{N^{d(s+1)}}\sum_{n,h_1,\ldots, h_s\in [-2CN^d,2CN^d]}\prod_{j=1}^{s}T_j(h_j)e(\alpha_jh_j)\cdot \prod_{\omega\in \{0,1\}^s}\mathcal{C}^{|\omega|}f_k1_{[-CN^d,CN^d]}(n+\omega\cdot \mathbf{h})\right|\\
&\quad\gg_{C,d,k} (\delta')^{2^s}. 
\end{align*}

Writing out $T_j$ as a trigonometric polynomial and applying the pigeonhole principle, we see that for some real numbers $\alpha_1',\ldots, \alpha_s'$ we have 
\begin{align*}
&\left|\frac{1}{N^{d(s+1)}}\sum_{n,h_1,\ldots, h_s\in [-2CN^d,2CN^d]}\prod_{j=1}^{s}e(\alpha_j' h_j)\cdot \prod_{\omega\in \{0,1\}^s}\mathcal{C}^{|\omega|}f_k1_{[-CN^d,CN^d]}(n+\omega\cdot \mathbf{h})\right|\\
&\quad \gg_{C,d,k} (\delta')^{100s2^s}. 
\end{align*}
Using $e(\alpha_j'h_j)=e(\alpha_j'(n+h_j))e(-\alpha_j' n)$, this can be written as 
\begin{align*}
&\left|\frac{1}{N^{d(s+1)}}\sum_{n,h_1,\ldots, h_s\in [-2CN^d,2CN^d]}\prod_{\omega\in \{0,1\}^s}\mathcal{C}^{|\omega|}f_{k,\omega}1_{[-CN^d,CN^d]}(n+\omega\cdot \mathbf{h})\right|\gg_{C,d,k} (\delta')^{100s2^s}, 
\end{align*}
where each $f_{k,\omega}$ is of the form $f_{k,\omega}(n)=f_k(n)e(\alpha_{\omega}n)$ for some real number $\alpha_{\omega}$. 

Now, by the Gowers--Cauchy--Schwarz inequality~\eqref{eq:gcs} and the simple fact that 
$$\|f_ke(\alpha\cdot)\|_{U^s([-CN^d,CN^d])}=\|f_k\|_{U^s([-CN^d,CN^d])}$$
for any real number $\alpha$ and any $s\geq 2$, we conclude that 
\begin{align*}
\|f_k\|_{U^s([-CN^d,CN^d])}^{2^s}\gg_{C,d,k} (\delta')^{100s2^s}.  
\end{align*}
Recalling that $\delta'=\delta^{t}$ and taking $K= 100st$, this yields
\begin{align*}
\delta\ll_{C,d,k} \|f_k\|_{U^s[-CN^d,CN^d]}^{1/K}.
\end{align*}
Taking into account the definition of $\delta$ in~\eqref{eq:deltadef}, the proof is now complete.
\end{proof}

\section{Gowers norms of the M\"obius and von Mangoldt functions}

A key number-theoretic input needed for the proofs of our main theorems is a bound for the Gowers norms of the M\"obius and von Mangoldt functions saving an arbitrary power of logarithm, due to Leng~\cite{Leng-newII}.

\begin{proposition}[Quantitative $U^k$-uniformity of $\mu$]\label{prop:leng} Let $k\in \mathbb{N}$, $A>0$ and $N\geq 3$. Then we have
\begin{align*}
\|\mu\|_{U^k[N]}\ll_A (\log N)^{-A}.     
\end{align*}
\end{proposition}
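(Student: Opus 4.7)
The plan is to follow Leng's argument in~\cite{Leng-newII}, which combines two ingredients: the quasipolynomial inverse theorem for the Gowers $U^k$-norm of Leng, Sah and Sawhney~\cite{LSS}, and a quantitative Möbius--nilsequences orthogonality estimate with a Siegel--Walfisz type saving at the rational fiber of the polynomial factorization. The argument proceeds by contradiction: assume that $\|\mu\|_{U^k[N]} \geq \delta$ with $\delta = (\log N)^{-A}$, and derive a lower bound on the correlation of $\mu$ with a nilsequence that will be contradicted by the orthogonality estimate.

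First, I would invoke the quasipolynomial inverse theorem. Under the assumption on the Gowers norm, it produces a filtered nilmanifold $G/\Gamma$ of degree at most $k-1$, dimension $O((\log 1/\delta)^{O_k(1)})$ and complexity at most $M = (\log 1/\delta)^{O_k(1)}$, a polynomial sequence $g\colon \mathbb{Z}\to G$, and an $M$-Lipschitz function $F\colon G/\Gamma \to \mathbb{C}$ with $\|F\|_\infty \leq 1$, such that
\[
\Bigl| \frac{1}{N} \sum_{n \leq N} \mu(n) F(g(n)\Gamma) \Bigr| \gg \exp\bigl(-(\log 1/\delta)^{O_k(1)}\bigr).
\]
For $\delta = (\log N)^{-A}$ the complexity $M$ is polylogarithmic in $N$, which is the regime in which the next step will succeed; here the quasipolynomial nature of~\cite{LSS} is decisive, since earlier inverse theorems gave much worse (e.g.\ tower-type) complexity bounds.

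Next I would establish the quantitative Möbius--nilsequences estimate: for every $B > 0$ there is $\kappa = \kappa(B) > 0$ such that for every nilsequence $n \mapsto F(g(n)\Gamma)$ of complexity at most $(\log N)^{\kappa}$,
\[
\Bigl| \frac{1}{N} \sum_{n \leq N} \mu(n) F(g(n)\Gamma) \Bigr| \ll_{B} (\log N)^{-B}.
\]
This is proved via a Vaughan/Heath-Brown identity reducing the sum to Type I and Type II contributions, both handled using the polynomial orbit factorization theorem for nilsequences. The factorization decomposes $g$ into a smooth part, a rational part of bounded denominator, and an equidistributed part; the equidistributed piece yields polynomial savings through quantitative Weyl bounds, while the rational piece, which produces sums over arithmetic progressions of small modulus, is controlled by Siegel--Walfisz to obtain the arbitrary power-of-log saving.

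Taking $B = 2A$ and comparing the upper bound from this estimate with the lower bound from the inverse theorem produces a contradiction once $N$ is sufficiently large in terms of $A$, which gives the proposition. The main obstacle is tracking the complexity dependence through the Vaughan decomposition: every step (Cauchy--Schwarz, factorization theorem, Weyl equidistribution) incurs losses that are polynomial in $M$, so the quasipolynomial bound $M \leq (\log 1/\delta)^{O_k(1)}$ is only just strong enough to absorb these losses when $\delta = (\log N)^{-A}$. The earlier bound in~\cite{TT-JEMS} relied on Manners' inverse theorem whose tower-type complexity permitted only sub-logarithmic savings; it is precisely the quasipolynomial improvement of~\cite{LSS} that makes the full arbitrary power-of-log gain accessible.
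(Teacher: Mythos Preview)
Your sketch follows the right overall strategy, but the paper's own proof is much shorter: it simply cites the result as essentially known, combining Leng's Theorem~6 in~\cite{Leng-newII} with the triangle inequality for Gowers norms and~\cite[Theorem 2.5]{TT-JEMS} (using Siegel's ineffective lower bound on the exceptional modulus). In other words, the paper treats the proposition as a black-box consequence of existing literature, whereas you are unpacking the argument behind those references. That is a legitimate choice, and the mechanism you describe (quasipolynomial inverse theorem plus Vaughan-type decomposition into type~I/II sums, with Siegel--Walfisz handling the rational part of the factorisation) is indeed what underlies Leng's result.

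Two points are worth tightening. First, the complexity bound coming out of~\cite{LSS} is $M \leq \exp\bigl((\log 1/\delta)^{O_k(1)}\bigr)$, not $(\log 1/\delta)^{O_k(1)}$ as you wrote; with $\delta = (\log N)^{-A}$ this gives $M \leq \exp\bigl((\log\log N)^{O_{k,A}(1)}\bigr)$, which need not be polylogarithmic. The argument still closes, but one has to be more careful: the type~I/II estimates produce losses polynomial in $M$, and these are absorbed because the eventual savings (after iterating the factorisation down to the abelian case) are of strength $\exp(-c(\log N)^{c})$ from the equidistributed part, not merely polylogarithmic. Second, you do not explicitly isolate the Siegel-zero contribution. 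The paper's route makes this explicit: Leng's theorem controls $\mu$ against a Siegel-corrected approximant, and~\cite[Theorem~2.5]{TT-JEMS} combined with Siegel's bound $\widetilde{q} \gg_\epsilon (\log N)^{1/\epsilon}$ then shows that the Siegel correction itself has small $U^k$-norm. Your formulation folds this into the ``Siegel--Walfisz at the rational fiber'' step, which is fine in spirit but obscures where the ineffectivity enters.
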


\begin{remark}\label{rmk:liouville} Proposition~\ref{prop:leng} holds also with the Liouville function $\lambda$ in place of the M\"obius function $\mu$; this is seen from the identity 
\begin{align*}
\lambda(n)=\sum_{d\geq 1}\mu(n/d)1_{d^2\mid n}.    
\end{align*}    
\end{remark}

\begin{proof}[Proof of Proposition~\ref{prop:leng}] This follows by combining~\cite[Theorem 6]{Leng-newII} with the triangle inequality for the Gowers norms and~\cite[Theorem 2.5]{TT-JEMS} (where we can use Siegel's bound to lower bound the size of a Siegel modulus instead of a weaker effective bound).
\end{proof}

For the von Mangoldt function, we end up needing a stronger Gowers uniformity result, saving more than an arbitrary power of logarithm (as in~\cite{Leng-newII}). This necessitates taking into account the contribution of a possible Siegel zero. We introduce some definitions for this.

\begin{definition}
Let $c_0=1/1000$ and $Q\geq 2$. A real number $\widetilde{\beta}\in (0,1)$ is called \emph{a Siegel zero of level $Q$} if there exists a primitive Dirichlet character $\widetilde{\chi}$ of conductor $\leq Q$ such that $L(\widetilde{\beta},\widetilde{\chi})=0$ and $\widetilde{\beta}\geq 1-c_0/\log Q$. The character $\widetilde{\chi}$ is then called a \emph{Siegel character of level $Q$}. 
\end{definition}

By the Landau--Page theorem (see e.g.~\cite{pintz}), we know that if a Siegel zero of level $Q$ exists, it is unique and simple, and the corresponding Siegel character $\widetilde{\chi}$ is a unique non-principal real character. 

\begin{definition}[The Siegel model]\label{def:Siegelmodel}
Let $w\geq 2$ and $W=\prod_{p\leq w}p$. Let $\widetilde{\beta}$ be the Siegel zero of level $w$ and $\widetilde{\chi}$ be the Siegel character of level $w$, if they exist; otherwise let $\widetilde{\beta}=1$ and $\widetilde{\chi}=0$. Define for $n\in \mathbb{Z}$ the \emph{Siegel model}
\begin{align*}
\widetilde{\Lambda}_{W}(n)\coloneqq \Lambda_W(n)\left(1-\widetilde{\chi}(|n|)|n|^{\widetilde{\beta}-1}\right),    
\end{align*}
where
\begin{align}\label{eq:lambdaw}
\Lambda_W(n)\coloneqq \prod_{p\mid W}\Lambda_p(n),    
\end{align}
with $\Lambda_p$ as in~\eqref{eq:lambdap}.
\end{definition}

With this notation, we have the following version of the prime number theorem in arithmetic progressions with a Siegel correction. 

\begin{proposition}\label{prop:landaupage}
Let $N\geq 2$. Let $2\leq w\leq \exp((\log N)^{1/2})$, and let $\widetilde{\Lambda}_{W}$ be the Siegel model as in Definition~\ref{def:Siegelmodel}. Then
\begin{align}\label{eq:lambda2}
\max_{q\leq w}\,\max_{a\pmod q} \Bigg|\sum_{\substack{m\leq N\\m\equiv a\pmod q}}(\Lambda(m)-\widetilde{\Lambda}_{W}(m))\Bigg|\leq N\exp(-c(\log N)^{1/2})  
\end{align}
for some absolute constant $c>0$. 
\end{proposition}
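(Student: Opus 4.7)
The plan is to expand both $\sum \Lambda(m)$ and $\sum \widetilde{\Lambda}_W(m)$ over the arithmetic progression $m \equiv a \pmod q$ via Dirichlet characters modulo $q$, and then compare character-by-character: for $\sum \chi(m)\Lambda(m)$ we invoke the Landau--Page form of the prime number theorem, while $\sum \chi(m) \widetilde{\Lambda}_W(m)$ is evaluated directly using the arithmetic structure of $\widetilde{\Lambda}_W$. First reduce to the case $(a,q)=1$: if $\gcd(a,q)>1$ then since $q\mid W$, every $m \equiv a \pmod q$ satisfies $(m,W)>1$, so $\widetilde{\Lambda}_W$ vanishes on the AP, while $\sum\Lambda(m)$ is bounded by $O(\log^3 N)$, coming from prime powers of primes dividing $(a,q)$.

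Assuming $(a,q)=1$, orthogonality of characters mod $q$ (legitimate since $\widetilde{\Lambda}_W$ is supported on $(m,q)=1$) gives
\[
\sum_{\substack{m\leq N\\ m\equiv a\pmod q}}(\Lambda-\widetilde{\Lambda}_W)(m)=\frac{1}{\phi(q)}\sum_{\chi\pmod q}\overline{\chi}(a)\bigl(\psi(N,\chi)-T_\chi\bigr),
\]
where $\psi(N,\chi)=\sum_{m\leq N}\chi(m)\Lambda(m)$ and $T_\chi=\sum_{m\leq N}\chi(m)\widetilde{\Lambda}_W(m)$. The normalisation by $1/\phi(q)$ absorbs the outer sum, so it suffices to show $|\psi(N,\chi)-T_\chi|\ll N\exp(-c(\log N)^{1/2})$ uniformly in $\chi$. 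By Landau--Page, which applies since $q\leq w\leq \exp((\log N)^{1/2})$, one has
\[
\psi(N,\chi)=N\cdot 1_{\chi=\chi_0}-\frac{N^{\widetilde{\beta}}}{\widetilde{\beta}}\cdot 1_{\chi=\widetilde{\chi}^{(q)},\,\widetilde{q}\mid q}+O\bigl(N\exp(-c(\log N)^{1/2})\bigr),
\]
where $\widetilde{\chi}^{(q)}$ denotes the character mod $q$ induced from the primitive Siegel character $\widetilde{\chi}$ mod $\widetilde q$ (the induced and primitive $L$-functions share non-trivial zeros, the local correction at primes $p\mid q,\,p\nmid \widetilde q$ contributing only $O(\log^2 N)$ to $\psi$).

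To evaluate $T_\chi$, expand $\widetilde{\Lambda}_W(m)=(W/\phi(W))\,1_{(m,W)=1}\,(1-\widetilde{\chi}(m)m^{\widetilde{\beta}-1})$ and partition $m$ by its residue class $b$ modulo $W$. Elementary counting gives $\#\{m\leq N: m\equiv b\pmod W\}=N/W+O(1)$, and partial summation yields $\sum_{m\leq N,\,m\equiv b\pmod W}m^{\widetilde{\beta}-1}=N^{\widetilde{\beta}}/(W\widetilde{\beta})+O(1)$. Summing over the $\phi(W)/\phi(q)$ classes $b\in(\mathbb{Z}/W\mathbb{Z})^*$ with $b\equiv a\pmod q$ and applying orthogonality of characters on $(\mathbb{Z}/W\mathbb{Z})^*$, the product $\widetilde{\chi}\cdot\chi^{(W)}$ (both lifted to mod $W$) is the principal character exactly when $\chi=\widetilde{\chi}^{(q)}$ and $\widetilde{q}\mid q$, using the reality of the Siegel character. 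This precisely reproduces the main terms of $\psi(N,\chi)$ from Landau--Page, leaving only error terms.

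The main obstacle is to bound those error terms in the evaluation of $T_\chi$: the naive error from counting integers in each residue class modulo $W$ is $O(1)$, summing to $O(W/\phi(q))$ per character, which suffices only when $W$ is not too large relative to $N\exp(-c(\log N)^{1/2})$. For $w$ close to $\exp((\log N)^{1/2})$ one must instead employ sieve-theoretic estimates---the fundamental lemma of sieve theory for the principal-character component, giving an error $O(N(\phi(W)/W)\exp(-u\log u))$ with $u=\log N/\log W$---and Polya--Vinogradov or Burgess bounds on the short partial character sums $\sum_{k\leq N/d}\chi^*(k)$ arising for non-principal induced characters modulo $W$, combined with M\"obius inversion of $1_{(m,W)=1}$. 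Carrying out this analysis uniformly across the allowed range of $w$ yields $|\psi(N,\chi)-T_\chi|\ll N\exp(-c(\log N)^{1/2})$ for every $\chi\pmod q$, completing the proof.
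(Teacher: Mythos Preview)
Your overall structure matches the paper's: reduce to $(a,q)=1$, expand in characters modulo $q$, and for each $\chi$ compare $\psi(N,\chi)$ (via the Landau--Page form of the prime number theorem, i.e.\ \cite[Theorem 5.27]{iw-kow}) against $T_\chi=\sum_{m\leq N}\chi(m)\widetilde\Lambda_W(m)$. The divergence is entirely in how $T_\chi$ is evaluated.

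The paper shows that
\[
\sum_{m\leq N}\Lambda_W(m)\chi(m)=N\cdot 1_{\chi=\chi_0}+O\bigl(N\exp(-(\log N)^{1/2}/100)\bigr)
\]
for \emph{every} $\chi\pmod q$, using the fundamental lemma of sieve theory in both the principal and non-principal cases. The key point you miss for non-principal $\chi$ is that one partitions into residue classes $b\pmod q$ (not $\pmod W$), applies the fundamental lemma to each progression $\{m\leq N:m\equiv b\pmod q\}$ to obtain the \emph{same} main term $N/\varphi(q)$ for every $b$ coprime to $q$, and then invokes $\sum_{b}\chi(b)=0$. Partial summation then handles the $m^{\widetilde\beta-1}$ weight. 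No character-sum bounds are needed at all.

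Your alternative route (M\"obius inversion of $1_{(m,W)=1}$ plus P\'olya--Vinogradov for the non-principal part) does not work as stated: the full expansion $\sum_{d\mid W}\mu(d)$ has $2^{\pi(w)}$ terms, which for $w$ near $\exp((\log N)^{1/2})$ is far larger than any power of $N$, and this swamps the P\'olya--Vinogradov saving on each inner sum $\sum_{k\leq N/d}\chi^*(k)$. One could repair this by truncating the M\"obius sum at $d\leq w^s$ and controlling the tail via Rankin's trick (as in the paper's Lemma~\ref{le:Eerror}), but that step is absent from your sketch, so the closing sentence ``Carrying out this analysis uniformly\ldots'' conceals a genuine, if fixable, gap. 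The paper's device of splitting modulo $q$ rather than modulo $W$ bypasses the issue entirely and is both shorter and more robust.
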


\begin{proof}
Let $1\leq a\leq q\leq w$ be chosen so that the maximum in~\eqref{eq:lambda2} is attained. Denote the sum on the left-hand side of by $S(N;q,a)$. If a prime $p$ divides $(a,q)$, then $p\leq w$, so $\widetilde{\Lambda}_{W}(m)=0$ for all $m\equiv 0\pmod{p}$ and $\Lambda(m)=0$ for all $m\equiv 0\pmod{p}$ that are not of the form $p^j$ with $j\in \mathbb{N}$. Hence we have $|S(N;q,a)|\ll (\log N)^2$, which is much stronger than the claimed bound. This means that we can assume from now on that $(a,q)=1$.

Expanding the indicator $1_{m\equiv a\pmod q}$ in terms of Dirichlet characters, we find
\begin{align}\label{eq:chibeta}\begin{split}
&\sum_{\substack{m\leq N\\m\equiv a\pmod q}}\widetilde{\Lambda}_{W}(m)=\frac{1}{\varphi(q)}\sum_{\chi\pmod q}\overline{\chi}(a)\sum_{m\leq N}\widetilde{\Lambda}_{W}(m)\chi(m).
\end{split}
\end{align}

We claim that for any character $\chi\pmod q$ we have
\begin{align}\label{eq:psi}
\sum_{m\leq N}\Lambda_W(m)\chi(m)=\begin{cases}N+O(N\exp(-(\log N)^{1/2}/100)),\quad &\textnormal{if}\quad \chi\quad \textnormal{is principal}.\\
O(N\exp(-(\log N)^{1/2}/100)),\quad &\textnormal{if}\quad \chi\quad \textnormal{is non-principal}.
\end{cases}
\end{align}
Indeed, if $\chi\pmod q$ is principal, the fundamental lemma of sieve theory (\cite[Theorem 6.9]{opera}) gives
\begin{align*}
\sum_{m\leq N}\Lambda_W(m)\chi(m)=\sum_{m\leq N}\Lambda_W(m)=N+O(N(\log w)e^{-s}),    
\end{align*}
where $s$ is such that $w^s=N^{1/10}$, so in particular $s\geq (\log N)^{1/2}/10$. This gives the first part of the claim~\eqref{eq:psi}. If $\chi\pmod q$ is non-principal, in turn, the fundamental lemma of sieve theory again gives
\begin{align*}
\sum_{m\leq  N}\Lambda_W(m)\chi(m)&=\sum_{\substack{b\pmod q\\(b,q)=1}}\chi(b)\,\sum_{\substack{m\leq N\\m\equiv b\pmod q}}\Lambda_W(m)\\
&=\frac{N}{\varphi(q)}\, \,\sum_{\substack{b\pmod q\\(b,q)=1}}\chi(b)+O(N(\log w)e^{-s})\\
&=O(N(\log w)e^{-s}),    
\end{align*}
where $s$ is as above, so in particular $s\geq (\log N)^{1/2}/10$. This completes the proof of~\eqref{eq:psi}.

Using~\eqref{eq:psi}, we obtain
\begin{align*}
\sum_{m\leq N}\widetilde{\Lambda}_{W}(m)\chi(m)=N1_{\chi \textnormal{ principal}}-\frac{N^{\widetilde{\beta}}}{\widetilde{\beta}}1_{\chi\widetilde{\chi}\textnormal{ principal}}+O(N\exp(-(\log N)^{1/2}/100)).  
\end{align*}
Inserting this into~\eqref{eq:chibeta}, we see that
\begin{align*}
\sum_{\substack{m\leq N\\m\equiv a\pmod q}}\widetilde{\Lambda}_{W}(m)=\frac{N}{\varphi(q)}\left(1-1_{\widetilde{q}\mid q}\widetilde{\chi}(a)\frac{N^{\widetilde{\beta}-1}}{\widetilde{\beta}}\right)+O(N\exp(-(\log N)^{1/2}/100)).    
\end{align*}
Now the claim~\eqref{eq:lambda2} follows from~\cite[Theorem 5.27]{iw-kow} (which is a strengthening of the prime number theorem in arithmetic progressions taking the possible Siegel zero into account). 
\end{proof}

We are now ready to prove the key Gowers uniformity property of the von Mangoldt function used in this paper.

\begin{proposition}[Quantitative $U^k$-uniformity of $\Lambda$]\label{prop:leng2} For each $k\in \mathbb{N}$, there exists $c_k>0$ such that the following holds. Let $N\geq 3$ and $\exp((\log \log N)^{1/c_k})\leq w\leq \exp((\log N)^{1/10})$. Also let $W=\prod_{p\leq w}p$. Then we have
\begin{align}\label{eq:gowers}
\|\Lambda-\widetilde{\Lambda}_{W}\|_{U^k[N]}\ll_k \exp(-(\log w)^{c_k}).     
\end{align}  
\end{proposition}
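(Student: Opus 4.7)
The plan is to adapt the proof of Proposition~\ref{prop:leng} so as to keep track of the Siegel-zero correction: combine~\cite[Theorem 6]{Leng-newII} with~\cite[Theorem 2.5]{TT-JEMS}, but do not discard the contribution of the Siegel character. Concretely, Leng's Theorem 6 provides, for each reduced residue class $b\pmod{W}$, a bound of the form
$$\left\|\tfrac{\varphi(W)}{W}\Lambda(Wn+b)-\left(1-\widetilde{\chi}(Wn+b)(Wn+b)^{\widetilde{\beta}-1}\right)\right\|_{U^k[N/W]}\ll_k \exp(-(\log w)^{c_k}),$$
where the parenthesised subtracted term is exactly $\tfrac{\varphi(W)}{W}\widetilde{\Lambda}_W(Wn+b)$ by Definition~\ref{def:Siegelmodel} and~\eqref{eq:lambdaw}. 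This is precisely the W-tricked form of the claimed estimate, restricted to a reduced residue class.

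Having established the per-class bound, I would transfer it to a bound on the full interval $[N]$ using~\cite[Theorem 2.5]{TT-JEMS}. The residue classes $b$ with $(b,W)>1$ need only be handled trivially: on such classes, both $\Lambda$ and $\widetilde{\Lambda}_W$ vanish except on the $O((\log N)^2)$ prime powers divisible by some prime $p\leq w$, which contribute negligibly in the $U^k[N]$ norm.

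The main obstacle is the transfer in the previous paragraph: a naive triangle inequality over the $\varphi(W)\asymp e^w/\log w$ reduced residue classes would cost a factor of $W\sim e^w$ that would completely destroy the target bound $\exp(-(\log w)^{c_k})$. The transference lemma~\cite[Theorem 2.5]{TT-JEMS} is designed precisely to avoid such a loss, by exploiting the uniformity of the per-class bounds across $b$; one only needs to verify that $\widetilde{\Lambda}_W$ restricted to each reduced class matches the Siegel-corrected model appearing in Leng's statement in a way compatible with the hypotheses of that theorem. Beyond this transfer, the proof is essentially book-keeping: shrinking $c_k$ if necessary so that the input range $\exp((\log\log N)^{1/c_k})\leq w\leq \exp((\log N)^{1/10})$ is compatible with the range of admissible $w$ in Leng's theorem, and absorbing the resulting numerical constants into the implied constant.
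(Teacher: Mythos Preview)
Your plan diverges from the paper's proof and rests on characterizations of the two cited results that do not match how they are used here. The paper is explicit that \cite[Theorem~6]{Leng-newII} is precisely the case $w=\exp((\log N)^{1/10})$ of the proposition, stated as a full $U^k[N]$ bound for $\Lambda-\widetilde{\Lambda}_W$ rather than a $W$-tricked per-class estimate; the remark ``the general case follows from the same argument'' is the crux, meaning one must rerun Leng's machinery with a general $w$, not cite his output. Your parenthetical about ``shrinking $c_k$'' to match Leng's admissible range presumes that range already covers variable $w$, which is exactly what is at issue. As for \cite[Theorem~2.5]{TT-JEMS}, in the M\"obius proof it is invoked together with Siegel's lower bound on the exceptional modulus to absorb the Siegel correction term of the model; it is not a transference device that losslessly recombines per-class $U^k[N/W]$ bounds into a global $U^k[N]$ bound. (Such a recombination can in fact be done directly via Gowers--Cauchy--Schwarz after splitting all $k+1$ Gowers-norm variables modulo $W$, but you have not supplied that argument, and it would still require the per-class input for variable $w$.)

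The paper proceeds instead by contradiction. Setting $\delta=\exp(-(\log w)^{c_k})$ and assuming~\eqref{eq:gowers} fails, one applies the quasipolynomial inverse theorem~\cite{LSS} to the bounded function $\delta(\Lambda-\widetilde{\Lambda}_W)$ to obtain correlation with a nilsequence $F(g(n)\Gamma)$ of complexity and dimension polylogarithmic in $1/\delta$; after reducing to a vertical character, one decomposes $\Lambda-\widetilde{\Lambda}_W$ via a Vaughan-type identity~\cite[Lemma~5.1]{Leng-newII} and applies Leng's type~I, twisted type~I and type~II nilsequence estimates~\cite[Propositions~5.1--5.4]{Leng-newII}. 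These either yield the bound or force a factorisation $g=\epsilon g_1\gamma$ that drops the step of the nilmanifold; iterating at most $s$ times leaves a correlation of $\Lambda-\widetilde{\Lambda}_W$ with the indicator of an arithmetic progression, contradicting Proposition~\ref{prop:landaupage}. The Siegel correction enters only via~\cite[Proposition~7.1]{TT-JEMS}, which bounds the nilsequence correlation of $\Lambda_W-\widetilde{\Lambda}_W$ by a negative power of the Siegel modulus.
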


\begin{proof}
 For $w=\exp((\log N)^{1/10})$, this is Leng's~\cite[Theorem 6]{Leng-newII}. The general case follows from the same argument; for the sake of completeness we give some details.    

 Let $C_k$ be a large enough constant, and let $c_k>0$ be sufficiently small in terms of $C_k$. Let 
 $$\delta=\exp(-(\log w)^{c_k}).$$ 

 We have $\delta\leq 1/\log N$ by assumption, and clearly either~\eqref{eq:gowers} holds or
 \begin{align}\label{eq:delta2}
 \|\delta(\Lambda-\widetilde{\Lambda}_{W})\|_{U^k[N]}\geq \delta^2.    \end{align}

 The function $\delta(\Lambda-\widetilde{\Lambda}_W)$ is bounded pointwise by $O(1)$. Hence, we can apply the quasipolynomial inverse theorem for the Gowers norms~\cite{LSS}. Recalling that $C_k$ is large, the quasipolynomial inverse theorem and~\eqref{eq:delta2} imply that there exists a nilmanifold $G/\Gamma$ of degree $\leq k-1$, step $\leq k-1$, dimension $D\leq (\log (1/\delta))^{C_k}$ and complexity $M\leq \exp((\log (1/\delta))^{C_k})$, and a function $F\colon G/\Gamma\to \mathbb{C}$ which has Lipschitz norm at most $1$ (and hence is $1$-bounded) such that 
 \begin{align}\label{eq:start}
  \left|\frac{1}{N}\sum_{n\leq N}(\Lambda(n)-\widetilde{\Lambda}_{W}(n))F(g(n)\Gamma)\right|&\gg \exp\left(-\left(\log \frac{1}{\delta}\right)^{C_k}\right). 
  \end{align}
   Combining~\cite[Lemma A.6]{Leng-newII} (which is a Fourier expansion on the vertical torus) and~\cite[Lemma 2.2]{Leng-newII}, we may assume that $F$ is
a vertical character with frequency $\xi$ satisfying $|\xi|\leq \delta^{-(2D)^{C_k}}$ and that $G/\Gamma$ has a one-dimensional vertical component, at the cost of weakening~\eqref{eq:start} to
\begin{align}\label{eq:lowerb}
  \left|\frac{1}{N}\sum_{n\leq N}(\Lambda(n)-\widetilde{\Lambda}_{W}(n))F(g(n)\Gamma)\right|&\gg \exp\left(-\left(\log \frac{1}{\delta}\right)^{C_k^{3}}\right). 
  \end{align}
  
  From~\cite[Proposition 7.1]{TT-JEMS}, we have the estimate\footnote{There, $w$ was fixed to be $\exp((\log N)^{1/10})$, but this makes no difference in the argument.}
 \begin{align}\label{eq:siegelnil}\begin{split}
  \left|\frac{1}{N}\sum_{n\leq N}(\Lambda_W(n)-\widetilde{\Lambda}_W(n))F(g(n)\Gamma)\right|&\ll M^{(2D)^{C_k}} (\widetilde{q})^{-1/(2D)^{C_k}}.
  \end{split}
  \end{align}
  Hence, if~\eqref{eq:start} holds, we may assume that either we have 
  \begin{align}\label{eq:qbound}
   \widetilde{q}\leq \left(M/\exp\left(\left(\log \frac{1}{\delta}\right)^{C_k^{3}}\right)\right)^{(2D)^{C_k^2}}   
  \end{align}
  or  
  \begin{align}\label{eq:start2}
  \left|\frac{1}{N}\sum_{n\leq N}(\Lambda(n)-\Lambda_{W}(n))F(g(n)\Gamma)\right|&\gg \exp\left(-\left(\log \frac{1}{\delta}\right)^{C_k^3}\right). 
  \end{align}

 Now, applying~\cite[Lemma 5.1]{Leng-newII} (which is a version of Vaughan's identity), we may split $\Lambda-\widetilde{\Lambda}_{W}$ and $\Lambda-\Lambda_W$ into type I, twisted type I and type II sums, where we may assume that~\eqref{eq:qbound} holds if twisted type I sums appear. Applying now~\cite[Propositions 5.1--5.4]{Leng-newII} (which are estimates for type I, twisted type I and type II sums involving nilsequences $F(g(n)\Gamma)$
with a vertical character $F$ and where $G/\Gamma$ has a one-dimensional vertical
component, with the twisted type I sum estimate requiring~\eqref{eq:qbound}), the claim follows unless we have a factorisation
\begin{align*}
g(n)=\epsilon(n)g_1(n)\gamma(n)    
\end{align*}
for all $n\in \mathbb{Z}$, where $g_1$ takes values in a $\exp((\log(1/\delta))^{C_k^{10}})$-rational subgroup of $G$ whose step is $\leq s-1$, $\epsilon$ is $(\exp((\log(1/\delta))^{C_k^{10}}),N)$-smooth and $\gamma$ is $\exp((\log(1/\delta))^{C_k^{10}})$-rational. 

Let $Q$ be the period of $\gamma$. Then by the pigeonhole principle there exists an arithmetic progression $P\subset [N]$ of common difference $Q$ and length $\asymp \exp(-(\log(1/\delta))^{C_k^{10}})N$ such that 
\begin{align*}
 \left|\frac{1}{N}\sum_{n\leq N}(\Lambda(n)-\widetilde{\Lambda}_{W}(n))F(g(n)\Gamma)\right|&\leq  \left|\frac{1}{|P|}\sum_{n\in P}(\Lambda(n)-\widetilde{\Lambda}_{W}(n))F(\epsilon_0g_1(n)\gamma_0\Gamma)\right|\\
 &+O(\exp(-(\log(1/\delta))^{C_k^{10}}))    
\end{align*}
for some $\epsilon_0,\gamma_0\in G$. Factorise  $\gamma_0 = \{\gamma_0\}[\gamma_0]$ where $[\gamma_0] \in \Gamma$ and $|\psi(\{\gamma_0\})| \leq 1/2$, with $\psi$ the Mal'cev coordinate map. Writing $g'(n)=\{\gamma_0\}^{-1}g_1(n)\{\gamma_0\}$ and $F_1=F(\epsilon_0\{\gamma_0\}\cdot)1_P$, we conclude from~\eqref{eq:start} that
\begin{align}\label{eq:end}
  \left|\frac{1}{N}\sum_{n\leq N}(\Lambda(n)-\widetilde{\Lambda}_{W}(n))F_1(g'(n)\Gamma)\right|&\gg \exp\left(-\left(\log \frac{1}{\delta}\right)^{C_k^{10}}\right), 
  \end{align}
  where $F_1$ is defined on a nilmanifold of step at most $s-1$. Iterating the arguments from~\eqref{eq:start} to~\eqref{eq:end} $\leq s$ times (and adjusting the value of $\delta$ each time), we conclude that 
  \begin{align*}
  \left|\frac{1}{N}\sum_{n\leq N}(\Lambda(n)-\widetilde{\Lambda}_{W}(n))1_{P'}(n)\right|&\gg \exp\left(-\left(\log \frac{1}{\delta}\right)^{C_k^{10^s}}\right) 
  \end{align*}
for some arithmetic progression $P'$. But this contradicts Proposition~\ref{prop:landaupage}. Hence the claim~\eqref{eq:gowers} follows.   
\end{proof}

\section{M\"obius correlations along polynomial patterns}

We are now ready to prove Theorem~\ref{thm_general}(1), which immediately implies Theorem~\ref{thm_mobius} by the triangle inequality.

\begin{proof}[Proof of Theorem~\ref{thm_general}(1)] Let $P_1,\ldots, P_k\in \mathbb{Z}[y]$ be polynomials satisfying the assumption of Theorem~\ref{thm_general}(1).

For some unimodular $\theta_j\colon \mathbb{Z}\to \mathbb{C}$, we can write 
\begin{align*}
\mathbb{E}_{m\leq N}\left|\mathbb{E}_{n\leq N^d}\prod_{j=1}^k \mu(n+P_j(m))\right|=\mathbb{E}_{m\leq N}\mathbb{E}_{n\leq N^d}\theta_1(m)\prod_{j=1}^k \mu(n+P_j(m))    
\end{align*}
and
\begin{align*}
\mathbb{E}_{n\leq N^d}\left|\mathbb{E}_{m\leq N}\prod_{j=1}^k \mu(n+P_j(m))\right|=\mathbb{E}_{n\leq N^d}\mathbb{E}_{m\leq N}\theta_2(n)\prod_{j=1}^k \mu(n+P_j(m)).    
\end{align*}

Let $s\ll_{d,k}1$ be as in Proposition~\ref{prop:GVNT3}. Now, by Proposition~\ref{prop:GVNT3} (with $f_i=\mu$) and the assumption on the collection $\{P_1,\ldots,P_k\}$, it suffices to show that 
\begin{align*}
\|\mu\|_{U^s[M]}\ll_A (\log M)^{-A},    
\end{align*}
This bound is precisely Proposition~\ref{prop:leng}. 
\end{proof}

The rest of the paper is devoted to the proof of Theorem~\ref{thm_general}(2) (and the deduction of Theorem~\ref{thm_mangoldt1} from it).

\section{A local-to-global principle for mean values}

The main result of this section is Proposition~\ref{prop:wsieve} below, giving a factorisation result for mean values of a certain class of arithmetic functions that are close to $1$ on average, which, as a corollary, will be applied to correlations of the function $\Lambda_W$, given in~\eqref{eq:lambdaw}. This can be viewed as a local-to-global principle, since the main term in the asymptotic factorises according to the contribution of individual primes to the arithmetic function.

\begin{proposition}[A local-to-global principle for mean values]\label{prop:wsieve} Let $C\geq 1$ and $N\geq 2$. Let $P$ be a nonconstant polynomial with integer coefficients having no fixed prime divisor. Let $P$ have degree at most $C$ and all coefficients bounded by $N^{C}$ in modulus. Let $f\colon \mathbb{N}^2\to \mathbb{C}$ be a function satisfying the following.
\begin{enumerate}
    \item The function $n\mapsto f(p,n)$ is $p$-periodic for any prime $p$.

    \item We have $f(p,n)=1$ for $p>\exp((\log N)^{1/2})$.

    \item We have $|f(p,n)-1|\leq 1_{p\mid P(n)}+C/p$ for all primes $p\leq \exp((\log N)^{1/2})$ and all $n\in \mathbb{N}$. 
\end{enumerate} 
Then we have
\begin{align}\label{eq:fullproduct}
\mathbb{E}_{n\leq N}\prod_{p}f(p,n)=\prod_{p}\mathbb{E}_{n\in \mathbb{Z}/p\mathbb{Z}}f(p,n)+O_{C}(\exp(-(\log N)^{1/2}/10).     
\end{align}
\end{proposition}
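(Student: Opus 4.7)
The plan is to prove this via a Möbius-style expansion of the product over primes, combined with the Chinese Remainder Theorem for the small part and Rankin's trick for the tail.

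First, I would note that by hypothesis (2), only primes $p \leq w := \exp((\log N)^{1/2})$ contribute, so writing $W := \prod_{p \leq w} p$, $a_p := \E_{n \bmod p} f(p,n)$, and $g_p(n) := f(p,n) - 1$, both sides can be expanded in terms of squarefree divisors $d \mid W$:
\[
\prod_{p \leq w} f(p, n) = \sum_{d \mid W,\, \text{sqfree}} \prod_{p \mid d} g_p(n), \qquad
\prod_{p \leq w} a_p = \sum_{d \mid W,\, \text{sqfree}} \E_{n \bmod d} \prod_{p \mid d} g_p(n),
\]
where the second identity uses the CRT factorisation of averages over $\Z/d\Z$. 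The quantity to be bounded is therefore $\sum_{d \mid W,\, \text{sqfree}} (\E_{n \leq N} - \E_{n \bmod d}) \prod_{p \mid d} g_p(n)$.

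Next I would split the sum at the threshold $d = N^{1/4}$. For $d \leq N^{1/4}$, the pointwise bound $|g_p(n)| \leq 1_{p \mid P(n)} + C/p$ gives $|\prod_{p \mid d} g_p(n)| \leq \prod_{p \leq w}(1 + C/p) \ll (\log N)^{O(C)}$; combined with $d$-periodicity of $\prod g_p$, the discrepancy between $\E_{n \leq N}$ and $\E_{n \bmod d}$ is $O((d/N)(\log N)^{O(C)})$, which sums to $\ll (\log N)^{O(C)} N^{-1/2}$ --- negligible compared to the target.

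For $d > N^{1/4}$ I would bound each of $|\E_{n \leq N} \prod g_p|$ and $|\E_{n \bmod d} \prod g_p|$ separately. The second is $\prod_{p \mid d}|a_p - 1| \leq (2C)^{\omega(d)}/d$, using hypothesis (3) together with $\rho_P(p) \leq \deg P \leq C$ for large $p$. For the first, bounding $|\prod g_p| \leq \prod(1_{p \mid P(n)} + C/p)$, expanding the product, and using $\rho_P(e) \leq C^{\omega(e)}$ together with Mertens' estimate $\sigma(d)/d \leq \prod_{p \leq w}(1 + 1/p) \ll (\log N)^{1/2}$, one obtains
\[
\E_{n \leq N}\prod_{p \mid d}(1_{p \mid P(n)} + C/p) \ll C^{\omega(d)}(\log N)^{1/2}/d + (2C)^{\omega(d)}/(Nd).
\]
Applying Rankin's trick with parameter $s = 1/(\log N)^{1/2}$, and using $p^{s} \leq e$ for $p \leq w$, gives
\[
\sum_{\substack{d > N^{1/4}\\ d \mid W,\, \text{sqfree}}} \frac{C^{\omega(d)}}{d} \leq N^{-s/4}\prod_{p \leq w}(1 + Cp^{s-1}) \ll \exp(-(\log N)^{1/2}/4)(\log N)^{O(C)}.
\]
Absorbing the polylog factor and the additional $(\log N)^{1/2}$ using $\log \log N = o((\log N)^{1/2})$ yields the claimed error $\ll_C \exp(-(\log N)^{1/2}/10)$.

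The main technical obstacle is the large-$d$ regime, since for $d > N$ the Chinese Remainder Theorem cannot relate $\E_{n \leq N}$ to $\E_{n \bmod d}$ directly. The two faces of hypothesis (3) rescue the argument: the smallness $|f(p,n)-1| \leq C/p$ off the bad set $\{p \mid P(n)\}$, together with the density $\rho_P(p)/p \ll C/p$ of that bad set (enabled by the no-fixed-prime-divisor assumption and $\deg P \leq C$), conspire to give an effective $O(1/p)$ contribution per prime on average; this is exactly the decay needed so that Rankin's method with exponent $s = 1/\sqrt{\log N}$ matches the logarithmic scale of $w$ and produces the required exponential saving.
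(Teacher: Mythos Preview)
Your overall strategy---expand $\prod_p f(p,n)$ as $\sum_{d\mid W}\prod_{p\mid d} g_p(n)$, split at a threshold, use periodicity/CRT for small $d$, and Rankin for large $d$---is the same as the paper's. The small-$d$ part and the bound $\bigl|\E_{n\bmod d}\prod_{p\mid d} g_p(n)\bigr|\ll_C (2C)^{\omega(d)}/d$ for large $d$ are both correct.

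The gap is in your large-$d$ bound for $\E_{n\leq N}\prod_{p\mid d}(1_{p\mid P(n)}+C/p)$. Expanding over $e\mid d$ and using $\E_{n\leq N}1_{e\mid P(n)}\leq \rho_P(e)/e+\rho_P(e)/N$, the ``$\rho_P(e)/N$'' contributions sum to
\[
\frac{1}{N}\sum_{e\mid d}\frac{C^{\omega(d/e)}}{d/e}\,C^{\omega(e)}
=\frac{C^{\omega(d)}}{N}\sum_{f\mid d}\frac{1}{f}
=\frac{C^{\omega(d)}}{N}\prod_{p\mid d}\Bigl(1+\frac{1}{p}\Bigr),
\]
which carries \emph{no} factor of $1/d$; your claimed error $(2C)^{\omega(d)}/(Nd)$ is off by a factor of $d$. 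Summed over $d\mid W$ with $d>N^{1/4}$, this term is $\gg N^{-1}\sum_{d\mid W,\,d>N^{1/4}}C^{\omega(d)}$, and since $\sum_{d\mid W}C^{\omega(d)}=(1+C)^{\pi(w)}$ with $\pi(w)\sim e^{\sqrt{\log N}}/\sqrt{\log N}$, the result is astronomically large. Rankin cannot help here: the summand $C^{\omega(d)}$ has no decay in $d$, so inserting $d^s$ for any $s>0$ only makes things worse.

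The paper circumvents this by reversing the order of operations. It uses the pointwise bound $\bigl|\prod_{p\mid d}g_p(n)\bigr|\leq C^{\omega(d)}(d,P(n))/d$ and applies Rankin over $d$ \emph{for each fixed $n$}, obtaining
\[
\sum_{\substack{d\mid W\\ d>w^s}}\frac{C^{\omega(d)}(d,P(n))}{d}
\ll e^{-s}(\log w)^{O(C)}\,\tau(P(n))^{B}
\]
for some $B=O_C(1)$. Only then does it average over $n$, which requires the nontrivial input $\E_{n\leq N}\tau(P(n))^{B}\ll_C(\log N)^{O_C(1)}$; the paper supplies this via Landreau's inequality. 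This divisor-moment estimate for polynomial values is the missing ingredient in your proposal---without it (or a substitute), the tail over $d>N^{1/4}$ cannot be controlled once you have already averaged over $n$.
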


\begin{proof}
We may assume without loss of generality that $C\geq 2$. Let $w=\exp((\log N)^{1/2})$. Then, by assumption (2), it suffices to show that
\begin{align}\label{eq:truncate}
\mathbb{E}_{n\leq N}\prod_{p\leq w}f(p,n)=\prod_{p\leq w}\mathbb{E}_{n\in \mathbb{Z}/p\mathbb{Z}}f(p,n)+O_{C}(\exp(-(\log N)^{1/2}/10).     
\end{align}

Write $f(p,n)=1+a(p,n)$. Then, by assumptions (1) and (3), $n\mapsto a(p,n)$ is $p$-periodic for every $n\in \mathbb{N}$ and $|a(p,n)|\leq 1_{p\mid P(n)}+C/p$. Writing $W=\prod_{p\leq w}p$, we now obtain
\begin{align*}
\prod_{p\leq w}f(p,n)=\sum_{d\mid W}A(d,n),    
\end{align*}
where $A(d,n)\coloneqq \prod_{p\mid d}a(p,n)$. The function $n\mapsto A(d,n)$ is multiplicative, and for squarefree $d$ we have the crude estimate
\begin{align*}
|A(d,n)|\leq \prod_{\substack{p\mid d\\p\mid P(n)}}\left(1+\frac{C}{p}\right)\prod_{\substack{p\mid d\\p\nmid P(n)}}\frac{C}{p}\leq C^{\omega(d)}\prod_{\substack{p\mid d\\p\nmid P(n)}}\frac{1}{p}=\frac{C^{\omega(d)}(d,P(n))}{d}.   
\end{align*}

Let $s=2(\log N)^{1/2}/5$. Then we can split
\begin{align*}
\prod_{p\leq w}f(p,n)=\sum_{\substack{d\mid W\\d\leq w^s}}A(d,n)+O\Bigg(\sum_{\substack{d\mid W\\d> w^s}}\frac{C^{\omega(d)}(d,P(n))}{d}\Bigg).   \end{align*}
Averaging over $n\leq N$ gives
\begin{align}\label{eq:intermediate}
\mathbb{E}_{n\leq N}\prod_{p\leq w}f(p,n)=\sum_{\substack{d\mid W\\d\leq w^s}}\mathbb{E}_{n\leq N}A(d,n)+O\Bigg(\mathbb{E}_{n\leq N}\sum_{\substack{d\mid W\\d> w^s}}\frac{C^{\omega(d)}(d,P(n))}{d}\Bigg).   \end{align}
Using Rankin's trick and Mertens's bound, for any $n\in [N]$ we can estimate
\begin{align}\label{eq:Cd}\begin{split}
\sum_{\substack{d\mid W\\d> w^s}}\frac{C^{\omega(d)}(d,P(n))}{d}&\leq e^{-s}\sum_{d\mid W}\frac{C^{\omega(d)}(d,P(n))}{d^{1-1/(\log w)}}\\
&=e^{-s}\prod_{p\leq w}\left(1+\frac{C\cdot (p,P(n))}{p^{1-1/(\log w)}}\right)\\
&\leq e^{-s}\prod_{p\leq w}\left(1+\frac{Ce\cdot (p,P(n))}{p}\right)\\
&\ll e^{-s}(\log w)^{Ce}(1+Ce)^{\omega(P(n))}\\
&\leq e^{-s}(\log w)^{Ce}\tau(P(n))^{B},
\end{split}
\end{align}
where $B=B_C=(\log(1+Ce))/(\log 2)$. 

An elementary inequality due to Landreau~\cite{landreau} states that there exists $K$, depending only on $C$, such that for all $1\leq n\leq (C+1)N^{2C}$ we have 
\begin{align*}
\tau(n)^B\leq K\sum_{\substack{d\mid n\\d\leq N^{1/10}}}\tau(d)^{K}.    
\end{align*}
Using this and the fact (following from~\cite[Lemma 6.1]{BST}) that the congruence $P(x)\equiv 0\pmod{p^r}$ has at most $\min\{Cp^{r-1},Cp^{r(1-1/C)}\}$ solutions for any prime $p$ and $r\geq 1$, we see that
\begin{align}\label{eq:tau}\begin{split}
\mathbb{E}_{n\leq N}\tau(P(n))^{B}&\ll_C \sum_{d\leq N^{1/10}}\tau(d)^{K}\mathbb{E}_{n\leq N}1_{d\mid P(n)}\\
&\ll \sum_{d\geq 1}d^{-1/(\log N)}\tau(d)^{K}\mathbb{E}_{n\leq N}1_{d\mid P(n)}\\
&\ll \prod_{p}\left(1+O_C(p^{-1-1/\log N})\right)\\
&\ll_C (\log N)^{O_C(1)}   
\end{split}
\end{align}
by a standard bound for moments of the divisor function.

Combining~\eqref{eq:intermediate},~\eqref{eq:Cd},\eqref{eq:tau}, we see that
\begin{align}\label{eq:Adn}
\mathbb{E}_{n\leq N}\prod_{p\leq w}f(p,n)=\sum_{\substack{d\mid W\\d\leq w^s}}\mathbb{E}_{n\leq N}A(d,n)+O_C(\exp(-(\log N)^{1/2}/10)),    
\end{align}
Since $d\leq N^{2/5}$ in the above sum, we have 
\begin{align*}
\mathbb{E}_{n\leq N}A(d,n)=\widetilde{A}(d)+O(N^{-3/5+o(1)}),   
\end{align*}
where $\widetilde{A}(d)\coloneqq \mathbb{E}_{n\in \mathbb{Z}/d\mathbb{Z}}A(d,n)$. From the $d$-periodicity of $n\mapsto A(d,n)$ and the multiplicativity of $d\mapsto A(d,n)$, we see that $\widetilde{A}$ is a multiplicative function. Moreover, since $|A(d,n)|\leq C^{\omega(d)}(d,P(n))/d$, we have 
$$|\widetilde{A}(d)|\leq C^{\omega(d)}\prod_{p\mid d}\left(C+1-C/p\right)\leq (C+1)^{2\omega(d)}.$$
We conclude that  
\begin{align}\label{eq:A-tilde}
\mathbb{E}_{n\leq N}\prod_{p\leq w}f(p,n)=\sum_{d\mid W}\widetilde{A}(d)+O\Bigg(\sum_{\substack{d\mid W\\d> w^s}}\frac{(C+1)^{2\omega(d)}}{d}\Bigg)+O_C(\exp(-(\log N)^{1/2}/10)).    
\end{align}
The sum in the error term can be bounded using e.g.~\cite[Lemma 7.5]{BST}, which yields
\begin{align}\label{eq:Csumbound}
\sum_{\substack{d\mid W\\d> w^s}}\frac{(C+1)^{2\omega(d)}}{d}\ll_C  e^{-s/4}\ll \exp(-(\log N)^{1/2}/10).   
\end{align}
By the multiplicativity of $\widetilde{A}$, the main term on the right-hand side of~\eqref{eq:A-tilde} becomes
\begin{align*}
\prod_{p\leq w}\left(1+\widetilde{A}(p)\right).    
\end{align*}
The claim~\eqref{eq:truncate} now follows by noting that $1+\widetilde{A}(p)=1+\mathbb{E}_{n\in \mathbb{Z}/p\mathbb{Z}}a(p,n)=\mathbb{E}_{n\in \mathbb{Z}/p\mathbb{Z}}f(p,n)$.
\end{proof}

\begin{corollary}[A factorisation lemma for correlations]\label{cor:wsieve}
Let $k\in \mathbb{N}$ and $C\geq 1$. Let $N\geq 3$ and $w_1, \ldots, w_k,Q\leq \exp((\log N)^{1/2}/100)$. Let $W_j=\prod_{p\leq w_j}p$. Let  $a_1,\ldots, a_k\in [-N^C, N^{C}]$ be integers. Then we have 
\begin{align}\label{eq:sievelemma}\begin{split}
 &\mathbb{E}_{n\leq N}\prod_{j=1}^{k}\Lambda_{W_j}(Qn+a_j)\\
 &\quad =\prod_{p}\mathbb{E}_{n\in \mathbb{Z}/p\mathbb{Z}}\prod_{j=1}^{k}\Lambda_{(p,W_j)}(Qn+a_j)+O_{C,k}(\exp(-(\log N)^{1/2}/10)), 
 \end{split}
\end{align}
where $\Lambda_p$ for $p$ prime is as in~\eqref{eq:lambdap} and $\Lambda_1\coloneqq 1$. 
\end{corollary}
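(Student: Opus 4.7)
The plan is to derive \eqref{eq:sievelemma} from Proposition \ref{prop:wsieve} applied to the local function
$$f(p, n) := \prod_{j : p \mid W_j} \Lambda_p(Qn + a_j),$$
with the convention that an empty product equals $1$. Combining this with the definition \eqref{eq:lambdaw} of $\Lambda_{W_j}$ gives $\prod_p f(p, n) = \prod_{j=1}^k \Lambda_{W_j}(Qn + a_j)$, and since $\Lambda_{(p, W_j)} = \Lambda_p$ precisely when $p \mid W_j$ (and equals $1$ otherwise), the Euler-type product $\prod_p \mathbb{E}_{n \in \mathbb{Z}/p\mathbb{Z}} f(p, n)$ coincides with the main term on the right-hand side of \eqref{eq:sievelemma}. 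Hence the corollary reduces to verifying the hypotheses (1)--(3) of Proposition \ref{prop:wsieve} for a suitable polynomial $P$.

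I would choose $P(n) := \prod_{j=1}^k (Qn+a_j)/g_j$ where $g_j := \gcd(Q, a_j)$, so that each linear factor has coprime coefficients. Its degree is at most $k$ and its coefficients are bounded by $O_{C,k}(N^{O_{C,k}(1)})$. Hypothesis (1) is immediate from the $p$-periodicity of $\Lambda_p$, and (2) follows from $w_j \leq \exp((\log N)^{1/2}/100)$, which forces $f(p, n) = 1$ for $p > \exp((\log N)^{1/2})$. For (3), observe that $f(p, n) \in \{0, (p/(p-1))^{m_p}\}$ with $m_p := |\{j : p \mid W_j\}| \leq k$: if $f(p, n) = 0$ then some $Qn+a_j \equiv 0 \pmod p$ with $p \mid W_j$, and provided $p \nmid g_j$ this forces $p \mid P(n)$, giving $1_{p \mid P(n)} = 1$; if $f(p, n) \neq 0$ a binomial expansion yields $|f(p, n) - 1| \leq O_k(1/p)$. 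The remaining degenerate case $p \mid \gcd(Q, a_j)$ with $p \mid W_j$ forces $\Lambda_p(Qn+a_j) \equiv 0$ for all $n$, so both sides of \eqref{eq:sievelemma} vanish trivially and may be excluded.

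The main technical obstacle is that the polynomial $P$ can itself have fixed prime divisors. A short combinatorial argument, using that each factor has coprime coefficients and thus contributes at most one residue class mod $p$ to its zero set, shows that any such fixed divisor must satisfy $p \leq k$. To deal with these finitely many small primes, I would either modify $P$ slightly so as to eliminate them, or invoke the fact that the sole use of the no-fixed-divisor hypothesis in the proof of Proposition \ref{prop:wsieve} is in the bound \eqref{eq:tau} on $\mathbb{E}_n \tau(P(n))^B$; for a $P$ which is a product of linear factors, this bound still follows from H\"older's inequality together with the standard polylogarithmic moment bound for the divisor function on a single linear form. With this adjustment, Proposition \ref{prop:wsieve} produces the claimed error $O_{C,k}(\exp(-(\log N)^{1/2}/10))$, and the corollary follows with implied constants that change only by $O_k(1)$.
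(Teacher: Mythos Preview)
Your approach is essentially the same as the paper's: apply Proposition~\ref{prop:wsieve} with $f(p,n)$ the $p$-local factor of $\prod_j \Lambda_{W_j}(Qn+a_j)$ and $P(y)=\prod_j(Qy+a_j)$. The paper pulls the constants $\prod_p\prod_j \varphi((p,W_j))/(p,W_j)$ out of both sides first so that its $f$ is $\{0,1\}$-valued, but this is only cosmetic; your more careful handling of possible fixed prime divisors of $P$ (dividing out $g_j=\gcd(Q,a_j)$ and noting that any remaining fixed divisor is $\leq k$, or alternatively bypassing the hypothesis via H\"older and a linear divisor-moment bound) addresses a technicality that the paper's one-line proof leaves implicit.
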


\begin{proof}
This  follows from Proposition~\ref{prop:wsieve}, taking $f(p,n)=\prod_{j=1}^k1_{(p,W_j)\nmid Qn+a_j}$ there (and multiplying both sides of~\eqref{eq:sievelemma} by $\prod_p \prod_{j=1}^k \frac{\varphi((p,W_j))}{(p,W_j)}$). Properties (1)--(3) required of $f$ in Proposition~\ref{prop:wsieve} are immediate (with $C\ll_k 1$ and $P(y)=\prod_{j=1}^k(Qy+a_j)$.  
\end{proof}

We end this section by proving a lemma that bounds the moments of a sum that arises as an error term when truncating the definition of $\Lambda_{W}$. This will be needed in Section~\ref{sec:lambda}.

\begin{lemma}\label{le:Eerror}
Let $C\geq 1$. Let $w\geq 2$ and write $W=\prod_{p\leq w}p$. Then for any $N\geq 2$ and $s\geq 1$ we have
\begin{align*}
\sum_{n\leq N} \Bigg|\sum_{\substack{d\mid W\\d>w^s}}\mu(d)1_{d\mid n}\Bigg|^{C}\ll_C Ne^{-s}(\log w)^{2^{C+4}}.  
\end{align*}
\end{lemma}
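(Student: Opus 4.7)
The plan is to combine Rankin's trick with a trivial divisor bound, then sum the resulting multiplicative function via Mertens' theorem. Setting $S(n)\coloneqq\sum_{d\mid W,\,d>w^s}\mu(d)1_{d\mid n}$, the triangle inequality gives $|S(n)|\leq T(n)$, where $T(n)\coloneqq|\{d\: d\mid (W,n),\, d>w^s\}|$ counts the ``large'' divisors of $(W,n)$.

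My first step would be to apply Rankin's trick with parameter $\alpha=1/\log w$, which yields
$$T(n)\leq w^{-s\alpha}\sum_{d\mid (W,n)}d^{\alpha}=e^{-s}\prod_{\substack{p\mid n\\ p\leq w}}\bigl(1+p^{1/\log w}\bigr)\leq e^{-s}(1+e)^{\omega_w(n)},$$
where $\omega_w(n)\coloneqq|\{p\: p\mid n,\, p\leq w\}|$ and I have used $p^{1/\log w}\leq e$ for $p\leq w$. I would also note the trivial bound $T(n)\leq 2^{\omega_w(n)}$, valid since $(W,n)$ is squarefree with $\omega_w(n)$ prime factors. To get the $C$-th power bound I would split $|S(n)|^C=|S(n)|\cdot|S(n)|^{C-1}$ and apply Rankin to the first factor and the trivial bound to the remaining $C-1$ powers (using $C\geq 1$), obtaining
$$|S(n)|^C\leq e^{-s}(1+e)^{\omega_w(n)}\cdot 2^{(C-1)\omega_w(n)}=e^{-s}\beta^{\omega_w(n)},\quad \beta\coloneqq (1+e)2^{C-1}\leq 2^{C+1}.$$

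To finish, I would estimate $\sum_{n\leq N}\beta^{\omega_w(n)}$ via the identity $\beta^{\omega(m)}=\sum_{d\mid m}(\beta-1)^{\omega(d)}$ for squarefree $m$ (applied to $m=(W,n)$), followed by interchanging sums and applying Mertens' theorem to the resulting Euler product:
$$\sum_{n\leq N}\beta^{\omega_w(n)}\leq N\prod_{p\leq w}\left(1+\frac{\beta-1}{p}\right)\ll_{C} N(\log w)^{\beta-1}\leq N(\log w)^{2^{C+1}}.$$
Combining with the pointwise bound gives $\sum_{n\leq N}|S(n)|^C\ll_C Ne^{-s}(\log w)^{2^{C+1}}$, which is stronger than the claimed bound (for $w\geq 3$; the case $w=2$ is trivial, as $d>w^s\geq 2$ admits no divisor of $W=2$, so $S(n)\equiv 0$). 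I do not expect any substantive obstacle: the only real choice is the Rankin parameter $\alpha=1/\log w$, which optimally balances the saving $e^{-s}$ against the polylogarithmic loss.
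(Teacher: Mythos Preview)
Your argument is correct, and it differs from the paper's route in a way worth noting. The paper rounds $C$ up to the least even integer $2\ell\geq C$ (using that $S(n)\in\mathbb{Z}$ so $|S(n)|^C\leq |S(n)|^{2\ell}$), expands the $2\ell$-th moment as a sum over tuples $(d_1,\ldots,d_{2\ell})$ of divisors of $W$ exceeding $w^s$, averages over $n$ to produce a factor $1/[d_1,\ldots,d_{2\ell}]$, and only then applies Rankin's trick inside the resulting Euler product. You instead work pointwise: one application of Rankin to a single factor of $T(n)^C$ extracts the full $e^{-s}$ saving, the remaining factors are bounded trivially by $2^{\omega_w(n)}$, and the sum over $n$ of the resulting multiplicative weight $\beta^{\omega_w(n)}$ is handled by a clean divisor-sum identity and Mertens. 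Your approach avoids the even-integer rounding and the LCM manipulation, and delivers the sharper exponent $2^{C+1}$ on $\log w$ in place of the paper's $2^{C+4}$; the paper's moment expansion is perhaps the more ``default'' analytic move, but your pointwise splitting is both shorter and more transparent here.
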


\begin{proof} 
Let $2\ell$ be the least even integer that is $\geq C$. Then, we have
\begin{align*}
\mathbb{E}_{n\leq N} \Bigg|\sum_{\substack{d\mid W\\d>w^s}}\mu(d)1_{d\mid n}\Bigg|^{C}&\leq \mathbb{E}_{n\leq N} \Bigg|\sum_{\substack{d\mid W\\d>w^s}}\mu(d)1_{d\mid n}\Bigg|^{2\ell}\\
&\leq \sum_{\substack{d_1,\ldots, d_{2\ell}\mid W\\d_1,\ldots, d_{2\ell}>w^{s}}}\mathbb{E}_{n\leq N}\mu^2(d_1)\cdots \mu^2(d_{2\ell})\prod_{j=1}^{2\ell}1_{d_j\mid n}\\
&\leq \sum_{\substack{d_1,\ldots, d_{2\ell}\mid W\\d_1,\ldots, d_{2\ell}>w^{s}}}\frac{\mu^2(d_1)\cdots \mu^2(d_{2\ell})}{[d_1,\ldots, d_{2\ell}]}\\
&\leq e^{-s}\sum_{d_1,\ldots, d_{2\ell}\mid W}\mu^2(d_1)\cdots \mu^2(d_{2\ell})\frac{(d_1\cdots d_{2\ell})^{1/(2\ell\log w)}}{[d_1,\ldots, d_{2\ell}]}
\end{align*}

Writing the previous expression as an Euler product and applying Mertens's theorem, we see that it is
\begin{align*}
&=e^{-s}\prod_{p\leq w}\Bigg(1+\sum_{(a_1,\ldots, a_{2\ell})\in \{0,1\}^{2\ell}\setminus \{\mathbf{0}\}}p^{(a_1+\cdots +a_{2\ell})/(2\ell\log w)-\max_i\{a_i\}}\Bigg)\\
&\ll e^{-s}\prod_{p\leq w}\left(1+(2^{2\ell}-1)\frac{e}{p}\right)\\
&\ll_{\ell} e^{-s}(\log w)^{(2^{2\ell}-1)e}\\
&\ll e^{-s}(\log w)^{2^{C+4}},
\end{align*}
as desired.  
\end{proof}

\section{Prime correlations along polynomial patterns}\label{sec:lambda}

\subsection{Overview of proof}

In this section we prove Theorem~\ref{thm_general}(2) and deduce Theorem~\ref{thm_mangoldt1} from it. We begin with a brief sketch to indicate the main tools and why we need to work with the Siegel model, defined in Definition~\ref{def:Siegelmodel}. 

Suppose for simplicity that we want to estimate
\begin{align}\label{eq:lambda3}
S\coloneqq \frac{1}{N^4}\sum_{m\leq N}\sum_{n\leq N^3}\Lambda(n)\Lambda(n+m^3)\Lambda(n+2m^3)    
\end{align}
up to an $O_A((\log N)^{-A})$ error. Let $w_3=\exp((\log \log N)^{C_k})$ for a large constant $C$, and let $W_j=\prod_{p\leq w_j} p$. Writing $\Lambda=\widetilde{\Lambda}_{W_3}+E_3$, we see that
\begin{align*}
 S&=\frac{1}{N^4}\sum_{m\leq N}\sum_{n\leq N^3}\Lambda(n)\Lambda(n+m^3)\widetilde{\Lambda}_{W_3}(n+2m^3)\\
 &\quad +\frac{1}{N^4}\sum_{m\leq N}\sum_{n\leq N^3}\Lambda(n)\Lambda(n+m^3)E_3(n+2m^3)\\
 &\coloneqq S_1+S_2.  
\end{align*}
Combining Propositions~\ref{prop:GVNT3} and~\ref{prop:leng2}, we have $|S_2|\ll_A (\log N)^{-A}$. To estimate $S_1$, we choose some $w_2$ that is moderately large in terms of $w_3$ ($w_2=w_3^{(\log \log N)^C}$ works) and write $\Lambda=\widetilde{\Lambda}_{W_2}+E_2$ to obtain 
\begin{align*}
S_1&=\frac{1}{N^4}\sum_{m\leq N}\sum_{n\leq N^3}\Lambda(n)\widetilde{\Lambda}_{W_2}(n+m^3)\widetilde{\Lambda}_{W_3}(n+2m^3)\\
&\quad +\frac{1}{N^4}\sum_{m\leq N}\sum_{n\leq N^3}\Lambda(n)E_2(n+m^3)\widetilde{\Lambda}_{W_3}(n+2m^3)\\
&\coloneqq S_3+S_4.    
\end{align*}

Using Lemma~\ref{le:Eerror}, we see that $\widetilde{\Lambda}_{W_3}(n)$ is up to negligible error (in the $L^1$ norm) equal to 
$$(1-\widetilde{\chi}(n)n^{\widetilde{\beta}-1})\frac{W_3}{\varphi(W_3)}\sum_{\substack{t\mid W_3\\ t\leq w_3^{(\log \log N)^2}}}\mu(t)1_{t\mid n}.$$ 
Substituting this into $S_4$ and applying the triangle inequality, and denoting by $\widetilde{q}\leq w_3$ the modulus of $\widetilde{\chi}$, we see that for some $1\leq a,t\leq \widetilde{q}w_3^{(\log \log N)^2}$ we have
\begin{align*}
|S_4|\ll \frac{W_3}{\varphi(W_3)}\frac{\widetilde{q}w_3^{(\log \log N)^2}}{N^4}\left|\sum_{m\leq N}\sum_{n\leq N^3}\Lambda(n)E_2(n+m^3)1_{n+2m^3\equiv a\pmod t}\right|.
\end{align*}
Splitting the $m$ variable into progressions modulo $t$ and applying Propositions~\ref{prop:GVNT3} and~\ref{prop:leng2}\footnote{Note that we are applying Propositions~\ref{prop:GVNT3} with the coefficients of the polynomials involved bounded by $O(t^{3})$. Hence, Proposition~\ref{prop:GVNT3} involves a loss of $t^{O(1)}$, but we win back this loss thanks to the strong bound on the Gowers norm of $E_2$.}, we conclude that $|S_4|\ll_A (\log N)^{-A}$. Performing again a similar splitting $\Lambda=\widetilde{\Lambda}_{W_1}+E_1$ with $w_1$ large in terms of $w_2$ ($w_1=w_2^{(\log \log N)^C}$ works) and arguing as above, we conclude that
\begin{align}\label{eq:S}
S=\frac{1}{N^4}\sum_{m\leq N}\sum_{n\leq N^3}\widetilde{\Lambda}_{W_1}(n)\widetilde{\Lambda}_{W_2}(n+m^3)\widetilde{\Lambda}_{W_3}(n+2m^3)+O_A((\log N)^{-A}).    
\end{align}
Now the remaining task is to evaluate the correlations of the Siegel model with error terms that save an arbitrary power of logarithm. Writing $\widetilde{\Lambda}_{W_j}=\Lambda_{W_j}+E_j'$, the right-hand side of~\eqref{eq:S} splits into a sum of $8$ averages. The average involving each of the functions $\Lambda_{W_1}, \Lambda_{W_2}, \Lambda_{W_3}$ can be evaluated using Corollary~\ref{cor:wsieve}, and it gives us our main term. All the other averages involving at least one copy of $E_j'$ involve the correlations of a Dirichlet character of large conductor, and (after some work) they can be shown to be small by using the Weil bound.

If we tried to run the same argument with the simpler model $\Lambda_W$, we would run into serious trouble, since we can only save an arbitrary power of logarithm in the Gowers norm $\|\Lambda-\Lambda_{W_j}\|_{U^s[N]}$, whereas we would need to save at least $w_{j+1}^{C(\log \log N)^2}$, which is bigger since $w_3$ needs to be at least a large power of logarithm. The use of the Siegel model rectifies this, since we have a much better quasipolynomial bound on $\|\Lambda-\widetilde{\Lambda}_{W_j}\|_{U^k[N]}$ as a function of $w_j$.   

\subsection{Correlations of the Siegel model}

In what follows, we will need the following simple lemma.

\begin{lemma}\label{le:beta} Let $k\in \mathbb{N}$ be fixed, and let $P_1,...,P_k\in \mathbb{Z}[y]$ be fixed polynomials. Let $\beta_p(m), \beta_p'(m)$ be as in~\eqref{eq:betapm}. For any $N\geq 2$, $m\in [N]$ and $A>0$, we have 
\begin{align*}
\prod_{p>(\log N)^{2A}}\beta_p(m)=1+O_A((\log N)^{-A}),\quad \prod_{p>(\log N)^{2A}}\beta'(m)=1+O_A((\log N)^{-A})
\end{align*}
\end{lemma}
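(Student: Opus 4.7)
The strategy is to evaluate each local factor individually, establish a ``good/bad'' dichotomy among primes, and sum the error contributions over $p > T := (\log N)^{2A}$.

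For $\beta_p(m)$, unpacking the definition of $\Lambda_p$ gives
\[
\beta_p(m) = \left(\frac{p}{p-1}\right)^k \cdot \frac{p - r_p(m)}{p},
\]
where $r_p(m) := |\{P_j(m)\bmod p: 1 \le j \le k\}|$ counts distinct residues. A Taylor expansion in $1/p$ yields $\beta_p(m) - 1 = (k-r_p(m))/p + O_k(1/p^2)$. Introducing $D(y) := \prod_{1\le i<j\le k}(P_i(y)-P_j(y))$---a nonzero integer polynomial by the distinctness of the $P_i$ implicit in the hypothesis of Theorem~\ref{thm_general}(2)---I would observe that $r_p(m) = k$, and hence $\beta_p(m)=1+O_k(1/p^2)$, whenever $p\nmid D(m)$; otherwise one has only $\beta_p(m)=1+O_k(1/p)$. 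Since $|D(m)|\leq C_{\vec P}N^{C_{\vec P}}$ for $m\in [N]$, the number of primes $p>T$ dividing $D(m)$ is at most $\log|D(m)|/\log T \ll_{A,\vec P} \log N/\log\log N$. Combining these two bounds,
\[
\left|\log \prod_{p>T}\beta_p(m)\right| \ll_k \sum_{p>T}\frac{1}{p^2} + \frac{\log N}{T\log\log N} \ll_A (\log N)^{-A}.
\]

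For $\beta_p'(m)$ I would run a structurally parallel argument: write $\beta_p'(m) = (\tfrac{p}{p-1})^k(p-F_p(m))/p$ with $F_p(m) := |\bigcup_j\{m'\in\mathbb{F}_p: P_j(m')\equiv -m\pmod p\}|$, then identify a fixed polynomial $D'(y)$---assembled from the discriminants of the shifts $P_j(y)+m$ and the pairwise resultants $\operatorname{Res}_x(P_i(x)+y,P_j(x)+y)$---such that $p\nmid D'(m)$ pins $F_p(m)$ to its generic value, and conclude by the same counting and summation. The main obstacle lies in this case: $D'$ must be chosen so that its divisibility captures every mechanism by which $F_p(m)$ can deviate from this generic value---individual shifts failing to split into the expected number of linear factors over $\mathbb{F}_p$, or roots coinciding across different indices---while remaining a fixed integer polynomial in $y$ whose size is polynomial in $N$. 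This bookkeeping is the most delicate point; the $\beta_p(m)$ case, by contrast, reduces to routine computation once $D$ is in hand.
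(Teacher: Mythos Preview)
Your treatment of $\beta_p(m)$ is correct and is essentially the paper's own argument: both pass through the polynomial $D(y)=\prod_{i<j}(P_i(y)-P_j(y))$, observe that $\beta_p(m)=1+O_k(1/p^2)$ whenever $p\nmid D(m)$, and handle the at most $O(\log N/\log T)$ exceptional primes dividing $D(m)$ via the crude bound $\beta_p(m)=1+O_k(1/p)$.

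Your plan for $\beta_p'$, however, has a genuine gap. You hope to produce a fixed polynomial $D'$ such that $p\nmid D'(n)$ pins $F_p(n)$ to a single ``generic value'', but no such polynomial exists in general. The count $F_p(n)$ is the number of $\mathbb{F}_p$-roots of $\prod_j(P_j(y)+n)$; your discriminant and resultant conditions do guarantee that this polynomial is separable modulo $p$ with the factors having pairwise disjoint root sets, but even under those conditions the number of $\mathbb{F}_p$-roots of each factor $P_j(y)+n$ is governed by the Frobenius class of $p$ in the splitting field of $P_j(y)+n$ --- a congruence condition on $p$, not a divisibility one. For a concrete obstruction take $k=1$, $P_1(y)=y^2$: for $p\nmid 2n$ one has $F_p(n)=1+\bigl(\tfrac{-n}{p}\bigr)$ and hence $\beta_p'(n)=1-\bigl(\tfrac{-n}{p}\bigr)/(p-1)$, so that $\log\prod_{p>T}\beta_p'(n)$ is essentially $-\sum_{p>T}\bigl(\tfrac{-n}{p}\bigr)/p$, a character sum whose conductor can be as large as $4|n|\le 4N^d$. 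Bounding this by $O_A((\log N)^{-A})$ with $T=(\log N)^{2A}$ is well beyond what bad-prime counting can deliver. The paper's written proof, incidentally, only treats the $\beta_p(m)$ case explicitly.
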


\begin{proof}
Note first  that 
\begin{align*}
\left(\frac{p}{p-1}\right)^k\left(1-\frac{k}{p}\right)\leq \beta_p(m)\leq \begin{cases}
\left(\frac{p}{p-1}\right)^k\left(1-\frac{k}{p}\right),\quad &p\nmid \prod_{1\leq i<j\leq k}(P_i(m)-P_j(m))\\
\left(\frac{p}{p-1}\right)^{k},\quad &p\mid \prod_{1\leq i<j\leq k}(P_i(m)-P_j(m)).
\end{cases} 
\end{align*}
Since $\prod_{p>(\log N)^{2A}}(\frac{p}{p-1})^k(1-\frac{k}{p})=1+O_A((\log N)^{-2A})$, the claim follows by observing that for any $m\in [N]$ and $1\leq i<j\leq k$ we have 
\begin{align*}
\prod_{\substack{p\mid P_i(m)-P_j(m)\\p>(\log N)^{2A}}}\left(\frac{p}{p-1}\right)&\leq \exp\left(\sum_{\substack{p\mid P_i(m)-P_j(m)\\p>(\log N)^{2A}}}\frac{1}{p-1}\right)\leq \exp\left(\frac{2\log(|P_i(m)-P_j(m)|+2)}{(\log N)^{2A}-1}\right)\\
&\ll (\log N)^{-A}.    
\end{align*}        
\end{proof}

We proceed to calculate the correlations of the Siegel model $\widetilde{\Lambda}_W$ (given in Definition~\eqref{def:Siegelmodel}) along polynomial progressions; this will give the main term in Theorem~\ref{thm_general}(2).

\begin{lemma}[Siegel model correlations]\label{le:main}
Let $k,d\in \mathbb{N}$ be fixed, and let $P_1,\ldots, P_k\in \mathbb{Z}[y]$ be fixed polynomials of degree at most $d$. Let $N\geq 3$, $A>0$ and $(\log N)^{A}\leq w_1,\ldots, w_k\leq \exp((\log N)^{1/2}/(100k))$. Let $W_j=\prod_{p\leq w_j}p$. 
\begin{enumerate}
    \item We have 
    \begin{align*}
\mathbb{E}_{n\leq N^d}\prod_{j=1}^k \widetilde{\Lambda}_{W_j}(n+P_j(m))=\prod_{p}\beta_p(m)+O_A((\log N)^{-A})    
\end{align*}
for all but $\ll_A N/(\log N)^{A}$ integers $m\in [N]$.

\item We have 
\begin{align*}
\mathbb{E}_{m\leq N}\prod_{j=1}^k \widetilde{\Lambda}_{W_j}(n+P_j(m))=\prod_{p}\beta_p'(n)+O_A((\log N)^{-A})    
\end{align*}
for all but $\ll_A N^d/(\log N)^{A}$ integers $n\in [N^d]$. 
\end{enumerate}
\end{lemma}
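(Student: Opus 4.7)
The plan is to expand each Siegel-model factor $\widetilde{\Lambda}_{W_j}(n+P_j(m))=\Lambda_{W_j}(n+P_j(m))-\Lambda_{W_j}(n+P_j(m))\widetilde{\chi}(|n+P_j(m)|)|n+P_j(m)|^{\widetilde{\beta}-1}$ and multiply out, splitting the product into $2^k$ terms indexed by subsets $S\subseteq[k]$ (with $j\in S$ picking up the Siegel correction). The $S=\emptyset$ term will produce the claimed main term, while the remaining $2^k-1$ terms must be shown to contribute $O_A((\log N)^{-A})$ outside an exceptional set of $m$ (resp.\ $n$) of size $O_A(N/(\log N)^A)$ (resp.\ $O_A(N^d/(\log N)^A)$).

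For part (1), I would handle the main ($S=\emptyset$) term by applying Corollary~\ref{cor:wsieve} with $Q=1$ and $a_j=P_j(m)$, yielding $\prod_p\mathbb{E}_{n\in\mathbb{Z}/p\mathbb{Z}}\prod_j\Lambda_{(p,W_j)}(n+P_j(m))$ up to an error of size $\exp(-(\log N)^{1/2}/10)$. For $p\leq\min_j w_j$ this local factor equals $\beta_p(m)$ exactly, for $\min_j w_j<p\leq\max_j w_j$ both the partial factor and $\beta_p(m)$ are $1+O(1/p^2)$ so their ratio over this range contributes $1+O_A((\log N)^{-A})$ in view of $w_j\geq(\log N)^A$, and the tail $p>\max_j w_j$ is absorbed by Lemma~\ref{le:beta}.

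For the nonempty-$S$ error terms I would proceed by a dichotomy on the Siegel conductor $\widetilde{q}$. If $\widetilde{q}\leq(\log N)^{3A}$, Siegel's ineffective bound $1-\widetilde{\beta}\gg_{\epsilon}\widetilde{q}^{-\epsilon}$ (with $\epsilon=1/(6A)$) forces $(1-\widetilde{\beta})\log N\gg_A(\log N)^{1/2}$, whence $|n+P_{j_0}(m)|^{\widetilde{\beta}-1}\ll e^{-c_A(\log N)^{1/2}}$ for any fixed $j_0\in S$ and most $n\leq N^d$; a trivial bound on the remaining factors (using $\Lambda_{W_j}\ll(\log w_j)^{O(1)}$ pointwise and $|\widetilde{\chi}|\leq 1$) then suffices. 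If instead $\widetilde{q}>(\log N)^{3A}$, I would expand each $\Lambda_{W_j}(n+P_j(m))=\frac{W_j}{\varphi(W_j)}\sum_{d_j\mid(W_j,n+P_j(m)),\,d_j\leq w_j^s}\mu(d_j)$ up to a tail controlled in $L^1$ by Lemma~\ref{le:Eerror}, consolidate the resulting congruence conditions via CRT into a single arithmetic progression $n\equiv a\pmod{Q}$ with $Q\leq\prod_j w_j^{sk}$, and apply the Weil bound to the inner character sum $\sum_{n'\in I'}\prod_{j\in S}\widetilde{\chi}(|Qn'+a+P_j(m)|)$ to gain a factor of $\widetilde{q}^{-1/2}\ll(\log N)^{-3A/2}$, sufficient to absorb the divisor loss $Q^{O(1)}\leq\exp(O((\log N)^{1/2}))$.

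The main obstacle is ensuring non-degeneracy of the polynomial $\prod_{j\in S}(Qy+a+P_j(m))\pmod{\widetilde{q}}$ for the Weil bound to apply: it fails when some pair $P_i(m)\equiv P_j(m)\pmod{\widetilde{q}/(\widetilde{q},Q)}$ for $i\neq j$ in $S$, or when $\widetilde{\chi}$ is ramified at primes dividing $Q$. Since the $P_i-P_j$ are fixed nonzero polynomials and $\widetilde{q}>(\log N)^{3A}$ in this case, the number of $m\in[N]$ violating the distinctness condition is $O_k(N/\widetilde{q})\leq O_A(N/(\log N)^{3A})$, which lies within the allowed exceptional set; the ramified case is dealt with by factoring the character sum across primes dividing $\widetilde{q}$, with vanishing contributions when $\widetilde{\chi}$ kills an argument. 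Part (2) follows the same blueprint: the main term is produced by applying Proposition~\ref{prop:wsieve} directly with the $p$-periodic function $f(p,m)=\prod_j\frac{(p,W_j)}{\varphi((p,W_j))}1_{(p,W_j)\nmid n+P_j(m)}$ and polynomial $P(y)=\prod_j(n+P_j(y))$ (excluding a further small set of $n$ to ensure $P$ has no fixed prime divisor), and the error terms use the same Siegel/Weil dichotomy with the Weil bound now applied to $\sum_{m'\in I'}\prod_{j\in S}\widetilde{\chi}(|n+P_j(Qm'+b)|)$, where non-degeneracy modulo $\widetilde{q}$ is automatic once $\widetilde{q}>(\log N)^{3A}$ because the $P_i-P_j$ are fixed nonzero polynomials.
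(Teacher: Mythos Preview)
Your decomposition into $2^k$ terms and the treatment of the $S=\emptyset$ main term via Corollary~\ref{cor:wsieve} and Lemma~\ref{le:beta} match the paper, and your dichotomy on $\widetilde{q}$ is a reasonable (if redundant) device, since Siegel's bound already forces $\widetilde{q}\gg_A(\log N)^A$, making the small-$\widetilde{q}$ branch vacuous. However, the large-$\widetilde{q}$ branch contains a genuine error in the bookkeeping. You claim the Weil saving $\widetilde{q}^{-1/2}\ll(\log N)^{-3A/2}$ ``absorbs the divisor loss $Q^{O(1)}\leq\exp(O((\log N)^{1/2}))$'', but $(\log N)^{-3A/2}\cdot\exp(c(\log N)^{1/2})\to\infty$, so this is simply false. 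What actually makes the truncated-M\"obius expansion work is that, after inserting the density factor $1/[d_1,\dots,d_k]$ coming from the congruence restriction, the sum $\sum_{d_1,\dots,d_k}|\mu(d_1)\cdots\mu(d_k)|/[d_1,\dots,d_k]$ is only $(\log w)^{O_k(1)}$, i.e.\ a fixed power of $\log N$; this is the loss that $\widetilde{q}^{-1/2}$ must (and can) absorb. The paper sidesteps this entirely by \emph{not} expanding $\Lambda_{W_j}$: it removes the $|n+P_j(m)|^{\widetilde{\beta}-1}$ factors via the integral identity $y^{\widetilde{\beta}-1}=1+\int_1^{CN^d}(\widetilde{\beta}-1)u^{\widetilde{\beta}-2}1_{u\leq y}\,du$ (which you also omit in your large-$\widetilde{q}$ branch), splits $n$ into residue classes modulo $Q=[\widetilde{q}_1,\dots,\widetilde{q}_k]$ so that the characters become constant, and then applies Corollary~\ref{cor:wsieve} once more to evaluate the remaining $\Lambda_{W_j}$-correlation exactly.

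Your exceptional-set count is also too coarse. The Weil bound is applied prime-by-prime via CRT (recall $\widetilde{q}$ is essentially squarefree), and degeneracy occurs at a prime $p\mid\widetilde{q}$ whenever $P_i(m)\equiv P_j(m)\pmod p$ for some pair in $S$; it is \emph{not} the case that the only problematic $m$ are those with $P_i(m)\equiv P_j(m)\pmod{\widetilde{q}/(\widetilde{q},Q)}$, and the bound $O_k(N/\widetilde{q})$ does not follow. The paper accounts for this by writing the Weil output as $Q^{-1/2+o(1)}\bigl(\prod_{p\mid(P_i(m)-P_j(m)),\,p\mid Q}p^{1/2}\bigr)^{k^2}$ and then using a first-moment/Markov argument to bound the number of $m\in[N]$ with $\prod_{p\mid R(m),\,p\mid Q}p>Q^{1/(3k^2)}$ for a fixed nonzero polynomial $R$; this is the step that actually produces the exceptional set of size $\ll_A N/(\log N)^A$.
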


\begin{proof}\textbf{Proof of part (1).}
By definition, we can decompose
\begin{align*}
\widetilde{\Lambda}_{W_j}(n)=\Lambda_{W_j}(n)-\Lambda_{W_j}(n)|n|^{\widetilde{\beta}_j-1}\widetilde{\chi}_j(|n|).    
\end{align*}
Substituting this into the left-hand side of the first statement, we see that it suffices to show that
\begin{align}\label{eq:goal1a}
\mathbb{E}_{n\leq N^d}\prod_{j=1}^k \Lambda_{W_j}(n+P_j(m))=\prod_{p}\beta_p(m)+O_A((\log N)^{-A})    
\end{align}
for all $m\in [N]$, and that, for any nonempty set $\mathcal{J}\subset [k]$, we have
\begin{align}\label{eq:goal1b}
\mathbb{E}_{n\leq N^d}\prod_{j=1}^{k} \Lambda_{W_j}(n+P_j(m))\prod_{j\in \mathcal{J}}|n+P_j(m)|^{\widetilde{\beta}_j-1}\widetilde{\chi}_j(|n+P_j(m)|)\ll_A(\log N)^{-A}    
\end{align}
for all but $\ll_AN/(\log N)^{A}$ integers $m\in [N]$. Note that~\eqref{eq:goal1a} follows directly from Corollary~\ref{cor:wsieve} in view of Lemma~\ref{le:beta}.

We are left with showing~\eqref{eq:goal1b}. Using the identity
\begin{align}\label{eq:integral}
y^{\widetilde{\beta}-1}=1+\int_{1}^{CN^d}(\widetilde{\beta}-1)u^{\widetilde{\beta}-2}1_{u\leq y}\d u
\end{align}
for $1\leq y\leq CN^d$ and exchanging the order of integration and summation, we reduce to showing that for all but $\ll_A N/(\log N)^{A}$ choices of $m\in [N]$ we have
\begin{align*}
&\mathbb{E}_{n\leq N^d}\prod_{j=1}^{k} \Lambda_{W_j}(n+P_j(m))\prod_{j\in \mathcal{J}}1_{|n+P_j(m)|\geq u_j}\widetilde{\chi}_j(|n+P_j(m)|)\ll_A(\log N)^{-A} 
\end{align*}
for all $u_j\in [1,N^{d}]$. Since the condition $|n+P_j(m)|\geq u_j$ is equivalent to $n$ belonging to a union of two intervals and since $\widetilde{\chi}_j(-n)=\widetilde{\chi}_j(-1)\widetilde{\chi}_j(n)$, we reduce to showing that
for all but $\ll_A N/(\log N)^{A}$ choices of $m\in [N]$ we have
\begin{align}\label{eq:goal1bb}
&\mathbb{E}_{n\leq t}\prod_{j=1}^{k} \Lambda_{W_j}(n+P_j(m))\prod_{j\in \mathcal{J}}\widetilde{\chi}_j(n+P_j(m))\ll_A(\log N)^{-A} 
\end{align}
for all $t\in [N^d/(\log N)^{A},N^d]$. 

Let 
$$Q=[\widetilde{q}_1,\ldots \widetilde{q}_k],$$
where we recall that $\widetilde{q}_j\leq w_j\leq \exp((\log N)^{1/2}/(100k))$ is the modulus of the character $\widetilde{\chi}_j$. Then
\begin{align}\label{eq:Qsize}
(\log N)^{A}\ll_A  Q\leq \exp((\log N)^{1/2}/100),   
\end{align}
with the lower bound coming from Siegel's bound. 
 Splitting the $n$ variable in~\eqref{eq:goal1bb} into progressions $\pmod Q$ it suffices to show that for all $1\leq a\leq Q$ and all but $\ll_AN/(\log N)^{A}$ integers $m\in [N]$ we have
\begin{align*}
\mathbb{E}_{a\leq Q}\prod_{j\in \mathcal{J}}\widetilde{\chi}_j(a+P_j(m))\mathbb{E}_{n\leq t/Q}\prod_{j=1}^k \Lambda_{W_j}(Qn+a+P_j(m))\ll_A(\log N)^{-A}   
\end{align*}
for all $t\in [N^d/(\log N)^{A},N^d]$.
By Corollary~\ref{cor:wsieve}, the inner average over $n$ here is
\begin{align}\label{eq:pdiv}
\prod_{p}\mathbb{E}_{n\in \mathbb{Z}/p\mathbb{Z}}\prod_{j=1}^{k}\Lambda_{(p,W_j)}(Qn+a+P_j(m))+O_{d,k}(\exp(-(\log N)^{1/2})/10).    
\end{align}
If $p\nmid Q$, the factor depending on $p$ here is by a change of variables equal to 
\begin{align*}
  \mathbb{E}_{n\in \mathbb{Z}/p\mathbb{Z}}\prod_{j=1}^{k}\Lambda_{(p,W_j)}(n+P_j(m)),
\end{align*}
and if instead $p\mid Q$ it is equal to $$\prod_{j=1}^k\Lambda_{(p,W_j)}(a+P_j(m)).$$
Now, separating the contributions of $p\mid Q$ and $p\nmid Q$ in~\eqref{eq:pdiv}, it suffices to show that
\begin{align}\label{eq:Weil}
\mathbb{E}_{a\leq Q}\prod_{j\in \mathcal{J}}\widetilde{\chi}_j(a+P_j(m))\prod_{p\mid Q}\prod_{j=1}^k 1_{a+P_j(m)\not \equiv 0\pmod{(p,W_j)}}\ll_A (\log N)^{-A}   
\end{align}
for all but $\ll_AN/(\log N)^{A}$ integers $m\in [N]$.

For any $j\in [k]$ and any prime power $p^r$, write $\widetilde{\chi}_{j,p^r}$ for the character $\pmod{(p^r,\widetilde{q}_j)}$ induced by $\widetilde{\chi}_j\pmod {\widetilde{q}_j}$.
Then, by the Chinese remainder theorem,~\eqref{eq:Weil} factorises as
\begin{align}\label{eq:CRT}
\prod_{p^{r}\mid \mid Q}\mathbb{E}_{a\in \mathbb{Z}/p^{r}\mathbb{Z}}\prod_{j\in \mathcal{J}}\widetilde{\chi}_{j,p^r}(a+P_j(m))\prod_{j=1}^k 1_{a+P_j(m)\not \equiv 0\pmod{(p,W_j)}}.     
\end{align}
 Since $\widetilde{\chi}_j$ are primitive real characters, their moduli are of the form $2^{b_j}q_j'$ with $q_j'$ squarefree and $b_j\leq 3$. Hence, for any prime $p>2$ the average in~\eqref{eq:CRT} equals to
\begin{align}\label{eq:aPj}
\mathbb{E}_{a\in \mathbb{Z}/p\mathbb{Z}}\prod_{j\in \mathcal{J}}\widetilde{\chi}_{j,p}(a+P_j(m))+O_{d,k}(1/p).    
\end{align}

From the Weil bound~\cite[Corollary 11.24]{iw-kow}, the average in~\eqref{eq:aPj} is $\leq K p^{-1/2}$ in modulus for some constant $K=K_{d,k}$ unless there exist $i\neq j$ such that $P_i(m)-P_j(m)\equiv 0\pmod p$. Hence, for some $1\leq i<j\leq k$,  the expression~\eqref{eq:CRT} is
\begin{align}\label{eq:Q-1}
\ll \left(\prod_{p\mid Q}Kp^{-1/2}\right) \left(\prod_{\substack{p\mid P_i(m)-P_j(m)\\p\mid Q}}p^{1/2}\right)^{k^2}\ll_{d,k} Q^{-1/2+o(1)} \left(\prod_{\substack{p\mid P_i(m)-P_j(m)\\p\mid Q}}p^{1/2}\right)^{k^2},  
\end{align}
since $Q/2^b$ is squarefree for some $b\leq 3$. From~\eqref{eq:Qsize}, we see that $Q^{-1/10}\ll_A(\log N)^{-A}$, so it suffices to show that for any nonzero polynomial $R\in \mathbb{Z}[y]$ for all but $\ll_{A,R} N/(\log N)^{A}$ integers $m\in [N]$ we have
\begin{align}\label{eq:largegcd}
\prod_{\substack{p\mid R(m)\\p\mid Q}}p\leq Q^{1/(3k^2)}.    
\end{align}
Note that, by Lagrange's theorem, for any polynomial $R\in \mathbb{Z}[y]$ of degree $d\in \mathbb{N}$, the congruence $R(m)\equiv 0\pmod p$ has $\leq d$ solutions for all primes $p$ that are large enough in terms of $R$. Now, by Markov's inequality, the number of $m\in [N]$ violating~\eqref{eq:largegcd} is
\begin{align*}
&\leq Q^{-1/(3k^2)}\sum_{m\leq N}(R(m),Q)\\
&\leq Q^{-1/(3k^2)}\sum_{t\mid Q}t\sum_{\substack{m\leq N\\t\mid R(m)}}1\\
&\ll_R Q^{-1/(3k^2)}\sum_{t\mid Q}d^{\Omega(t)}N\\
&\leq Q^{-1/(3k^2)}\tau(Q)d^{\Omega(Q)}N\\
&\ll Q^{-1/(3k^2)+o(1)}N,
\end{align*}
since $\Omega(Q)\leq \omega(Q)+O(1)\ll (\log Q)/(\log \log Q)$ by the fact that $Q/2^b$ is squarefree for some $b\leq 3$. By~\eqref{eq:Qsize}, we conclude that the number of $m\in [N]$ not satisfying~\eqref{eq:largegcd} is $\ll_{A,R} N/(\log N)^{A}$. Hence these integers $m\in [N]$, may be included in the exceptional set of $m$. 

\textbf{Proof of part (2)} The proof proceeds the same way as for Lemma~\ref{le:main}(1), up to a swapping of the averaging variable, until (in analogy with~\eqref{eq:Weil}) we we are left with showing that
\begin{align}\label{eq:Weil2}
\mathbb{E}_{b\leq Q}\prod_{j\in \mathcal{J}}\widetilde{\chi}_j(n+P_j(b))\prod_{p\mid Q}\prod_{j=1}^k 1_{n+P_j(b)\not \equiv 0\pmod{(p,W_j)}}\ll_A (\log N)^{-A}   
\end{align}
for all but $\ll_AN^d/(\log N)^{A}$ integers $n\in [N^d]$. 

As in the proof of part (1), for any $j\in [k]$ and any prime power $p^r$, write $\widetilde{\chi}_{j,p^r}$ for the character $\pmod{(p^r,\widetilde{q}_j)}$ induced by $\widetilde{\chi}_j\pmod {\widetilde{q}_j}$.
Then, by the Chinese remainder theorem,~\eqref{eq:Weil} factorises as
\begin{align}\label{eq:CRT2}
\prod_{p^{r}\mid \mid Q}\mathbb{E}_{b\in \mathbb{Z}/p^{r}\mathbb{Z}}\prod_{j\in \mathcal{J}}\widetilde{\chi}_{j,p^r}(n+P_j(b))\prod_{j=1}^k 1_{n+P_j(b)\not \equiv 0\pmod{(p,W_j)}}.     
\end{align}
 Since $\widetilde{\chi}_j$ are primitive real characters, their moduli are of the form $2^{b_j}q_j'$ with $q_j'$ squarefree and $b_j\leq 3$. Hence, for any prime $p>2$ the average in~\eqref{eq:CRT2} equals to
\begin{align}\label{eq:bPj}
\mathbb{E}_{b\in \mathbb{Z}/p\mathbb{Z}}\prod_{j\in \mathcal{J}}\widetilde{\chi}_{j,p}(n+P_j(b))+O_{d,k}(1/p).    
\end{align}
By the Weyl bound, this is $\leq K p^{-1/2}$ in modulus for some constant $K=K_{d,k}$ unless the polynomial $f_n(y)\coloneqq (P_1(y)+n)\cdots (P_k(y)+n)$ is of the form $cg(y)^2$ modulo $p$ for some $c\in \mathbb{Z}/p\mathbb{Z}$ and some polynomial $g$. This can only happen if $p$ divides the discriminant $\Delta(n)$ of $f_n$. The discriminant $\Delta$ is some polynomial with integer coefficients. Hence,~\eqref{eq:CRT2} becomes
\begin{align*}
\ll \left(\prod_{p\mid Q}Kp^{-1/2}\right)\left(\prod_{\substack{p\mid \Delta(n)\\p\mid Q}}p^{1/2}\right)\ll_{d,k} Q^{-1/2+o(1)} \left(\prod_{\substack{p\mid \Delta(n)\\p\mid Q}}p^{1/2}\right).  
\end{align*}
Now we may use~\eqref{eq:largegcd} to conclude that this is $\ll Q^{-1/10}\ll_A (\log N)^{-A}$ for all but $\ll_A N^{d}/(\log N)^{A}$ integers $n\in [N^d]$. 
\end{proof}

\subsection{Proof of Theorem~\ref{thm_general}(2)}

We are now ready to prove Theorem~\ref{thm_general}(2) and to deduce Theorem~\ref{thm_mangoldt1} from it. We begin with the latter task.

\begin{proof}[Proof of Theorem~\ref{thm_mangoldt1} assuming Theorem~\ref{thm_general}(2)] Using the triangle inequality, we only need to prove that
\begin{align*}
 \mathbb{E}_{m\leq N}\prod_p\beta_p(m)=\prod_p\beta_p+O_A((\log N)^{-A}).   
\end{align*}
Since $\beta_p(m), \beta_p=1+O_{d,k}(1/p^2)$ and $\beta_p=\mathbb{E}_{m\in \mathbb{Z}/p\mathbb{Z}}\beta_p(m)$, it in fact suffices to show that
\begin{align*}
 \mathbb{E}_{m\leq N}\prod_{p\leq \exp((\log N)^{1/2})}\beta_p(m)=\prod_{p\leq \exp((\log N)^{1/2})}\mathbb{E}_{m\in \mathbb{Z}/p\mathbb{Z}}\beta_p(m)+O_A((\log N)^{-A}).   
\end{align*}

 This estimate follows from Proposition~\ref{prop:wsieve} with $f(p,n)=\beta_p(n)1_{p\leq \exp((\log N)^{1/2})}$; properties (1)--(3) required by the proposition are easily verified (with $C\ll_k 1$  and with $P$ being $\prod_{1\leq i<j\leq k}(P_i-P_j)$).
\end{proof}

\begin{proof}[Proof of Theorem~\ref{thm_general}(2)]
Shifting the collection $\{P_1,\ldots, P_k\}$ by a constant if necessary, we
may assume that none of the $P_i$ is the zero polynomial. 

For $j\in [k]$, let
\begin{align}\label{eq:wjdef}
w_j\coloneqq \exp((\log \log N)^{C^{k+1-j}}),
\end{align}
where $C=C(d,k)$ is a large enough constant. Then 
\begin{align*}
w_{j+1}=\exp((\log w_j)^{1/C}),    
\end{align*}
which will eventually allow us to apply Proposition~\ref{prop:leng2}.

For some unimodular $\theta_1,\theta_2\colon \mathbb{Z}\to \mathbb{C}$, we can write
\begin{align}\label{eq:Lambdaaverage}\begin{split}
&\mathbb{E}_{m\leq N}\left|\mathbb{E}_{n\leq N^d}\prod_{j=1}^k \Lambda(n+P_j(m))-\prod_p\beta_p(m)\right|\\
&\quad = \mathbb{E}_{m\leq N}\theta_1(m)\left(\mathbb{E}_{n\leq N^d}\prod_{j=1}^k \Lambda(n+P_j(m))-\prod_p\beta_p(m)\right)
\end{split}
\end{align}
and
\begin{align}\label{eq:Lambdaaverage2}
&\mathbb{E}_{n\leq N^d}\left|\mathbb{E}_{m\leq N}\prod_{j=1}^k \Lambda(n+P_j(m))-\prod_p\beta_p'(n)\right|\nonumber\\
&\quad = \mathbb{E}_{n\leq N^d}\theta_2(n)\left(\mathbb{E}_{m\leq N}\prod_{j=1}^k \Lambda(n+P_j(m))-\prod_p\beta'_p(n)\right).    
\end{align}
From Lemma~\ref{le:beta} and the estimates $\beta_p(m),\beta_p'(m)=1+O_{k}(1/p)$, we see that the expressions inside the brackets here are $\ll (\log N)^{O_k(1)}$.

We split the average on the right-hand side of~\eqref{eq:Lambdaaverage},~\eqref{eq:Lambdaaverage2}
into a sum of $2^k$ different averages by substituting $$\Lambda=\widetilde{\Lambda}_{W_j}+E_j$$
in the $j$th factor in~\eqref{eq:Lambdaaverage}, where $\widetilde{\Lambda}_{W_j}$ is as in Definition~\ref{def:Siegelmodel}. Note for later use that
\begin{align}\label{eq:Ebound}
|E_j(n)|\ll \log |n|+ \log N.    
\end{align}
Then it suffices to show that
\begin{align}\label{eq:goal1}
\mathbb{E}_{n\leq N^d}\prod_{j=1}^k \widetilde{\Lambda}_{W_j}(n+P_j(m))=\prod_{p}\beta_p(m)+O_A((\log N)^{-A})    
\end{align}
for all but $\ll_AN/(\log N)^{A}$ integers $m\in [N]$, and that
\begin{align}\label{eq:goal1c}
\mathbb{E}_{m\leq N}\prod_{j=1}^k \widetilde{\Lambda}_{W_j}(n+P_j(m))=\prod_{p}\beta_p'(m)+O_A((\log N)^{-A})    
\end{align}
for all but $\ll_A N^d/(\log N)^{A}$ integers $n\in [N^d]$,
and that for any $0\leq \ell\leq k-1$ and any unimodular $\theta_1,\theta_2\colon \mathbb{Z}\to \mathbb{C}$ we have
\begin{align}\label{eq:goal2}\begin{split}
&\mathbb{E}_{n\leq N^d}\mathbb{E}_{m\leq N}\theta_1(m)\theta_2(n)\prod_{j=1}^{k-\ell-1} \Lambda(n+P_j(m))E_{k-\ell}(n+P_{k-\ell}(m))\prod_{j=k-\ell+1}^{k}\widetilde{\Lambda}_{W_j}(n+P_j(m))\\
&\quad \ll_A(\log N)^{-A}. 
\end{split}
\end{align}
The estimates~\eqref{eq:goal1},~\eqref{eq:goal1c} follow directly from Lemma~\ref{le:main}. Hence, the remaining task is proving~\eqref{eq:goal2}.

Suppose first that $\ell=0$. In this case, the claim~\eqref{eq:goal2} simplifies as
\begin{align*}
\mathbb{E}_{n\leq N^d} \mathbb{E}_{m\leq N}\theta_1(m)\theta_2(n)\prod_{j=1}^{k-1}\Lambda(n+P_j(m))\cdot E_{k}(n+P_k(m))\ll_A(\log N)^{-A}.  
\end{align*}
By Proposition~\ref{prop:GVNT3} (and the bounds $\Lambda(n)\ll \log |n|$ and~\eqref{eq:Ebound}), this holds unless
\begin{align*}
\|E_k\|_{U^s[CN^d]}\gg (\log N)^{-O_{A,d}(1)}  
\end{align*}
for some $1\leq s,C\ll_{P_1,\ldots, P_k} 1$. But since $E_{k}=\Lambda-\widetilde{\Lambda}_{W_k}$, this contradicts Proposition~\ref{prop:leng2} (recalling~\eqref{eq:wjdef} and that $C=C(d,k)$ is large enough there). We may suppose from now on that $1\leq \ell\leq k-1$.

 Let $$r=(\log \log N)^2.$$
 For $W,V\geq 1$, define the truncated function
\begin{align}\label{eq:VW}
\Lambda_{W,\leq V}(n)\coloneqq \frac{W}{\varphi(W)}\sum_{\substack{d\mid W\\d\leq V}}\mu(d)1_{d\mid n};  
\end{align}
then by the M\"obius inversion formula we have $\Lambda_W(n)=\Lambda_{W,\leq V}(n)$ for any $V\geq n$. Write
\begin{align}\label{eq:lambdadecompose}
\widetilde{\Lambda}_{W_j}(n)=\Lambda_{W_j,\leq w_j^{r}}(n)\cdot (1-|n|^{\widetilde{\beta}-1}\widetilde{\chi}(|n|))+\mathcal{E}_j(n).    
\end{align}
Now split~\eqref{eq:goal2} into a sum of $2^{\ell}$ different averages by writing~\eqref{eq:lambdadecompose} in the $j$th factor of~\eqref{eq:goal2} for $k-\ell+1\leq j\leq k$. Using the bounds $|\Lambda_{W_j}(n)|, |E_j(n)|\ll_{C,d} \log N$ for $|n|\leq CN^d$ and 
\begin{align*}
\mathbb{E}_{n\leq CN^d}|\Lambda_{W_j,\leq w_j^r}(n)|^k\leq \left(\frac{W_j}{\varphi(W_j)}\right)^k\mathbb{E}_{n\leq CN^d}\tau(n)^k\ll_C (\log N)^{2^k-1+o(1)}    
\end{align*}
and H\"older's inequality, it suffices to show that
\begin{align}\label{eq:goal2b}\begin{split}
&\mathbb{E}_{n\leq N^d}\mathbb{E}_{m\leq N}\theta_1(m)\theta_2(n)\prod_{j=1}^{k-\ell-1}\Lambda(n+P_{j}(m)) \cdot E_{k-\ell}(n+P_{k-\ell}(m))\\
&\quad \cdot \prod_{j=k-\ell+1}^k \Lambda_{W_j,\leq w_j^r}(n+P_j(m))(1-|n+P_j(m)|^{\widetilde{\beta}_j-1}\widetilde{\chi}_j(|n+P_j(m)|))\\
&\quad \quad \quad  \ll_A (\log N)^{-A},
\end{split}
\end{align}
and that for any $j\in [k]$ we have
\begin{align}\label{eq:Ejbound2}
\mathbb{E}_{n\leq N^d}\mathbb{E}_{m\leq N}|\mathcal{E}_j(n+P_j(m))|^k\ll_A(\log N)^{-A}.    
\end{align}
The bound~\eqref{eq:Ejbound2} follows by making the change of variables $n'=n+P_j(m)$ and applying Lemma~\ref{le:Eerror} with $s=r=(\log \log N)^2$. The remaining task is then to prove~\eqref{eq:goal2b}. 

For proving~\eqref{eq:goal2b}, our aim is to split $m$ and $n$ into progressions in~\eqref{eq:goal2b} to make the last product over $j$ there constant, after which Proposition~\ref{prop:GVNT3} can be applied. Expanding out the definition of $\Lambda_{W_j,\leq w_j^r}$, the estimate~\eqref{eq:goal2b} reduces to showing that
\begin{align*}
&\mathbb{E}_{n\leq N^d}\mathbb{E}_{m\leq N}\theta_1(m)\theta_2(n)\prod_{j=1}^{k-\ell-1} \Lambda(n+P_{j}(m))\cdot E_{k-\ell}(n+P_{k-\ell}(m))\\
& \quad \cdot \prod_{j=k-\ell+1}^k1_{d_j\mid n+P_j(m)}(1-|n+P_j(m)|^{\widetilde{\beta}_j-1}\widetilde{\chi}_j(|n+P_j(m)|))\\
&\quad \quad \ll_A \frac{(\log N)^{-A}}{[d_{k-\ell+1},\ldots, d_{k}]}.
\end{align*}
for any natural numbers $d_j\leq w_j^r$. 

Using~\eqref{eq:integral} and exchanging the order of integration and summation, and splitting the $n$ variable into short segments, we reduce to
\begin{align}\label{eq:goal2d}\begin{split}
&\mathbb{E}_{m\leq N}\theta_1(m)\theta_2(n)\mathbb{E}_{n\in I}\prod_{j=1}^{k-\ell-1} \Lambda(n+P_{j}(m))\cdot E_{k-\ell}(n+P_{k-\ell}(m))\prod_{j=k-\ell+1}^k1_{d_j\mid n+P_j(m)}\widetilde{\chi}_j(n+P_j(m))\\
&\quad \quad \ll_A \frac{(\log N)^{-A}}{[d_{k-\ell+1},\ldots, d_{k}]}
\end{split}
\end{align}
for any interval $I\subset [N^d/(\log N)^{A}, N^d]$ of length $N^d/(\log N)^{2A}$ and all unimodular weights $\theta_1,\theta_2\colon \mathbb{Z}\to \mathbb{C}$.

Let 
\begin{align*}
Q'= \prod_{j=k-\ell+1}^k\widetilde{q}_j,\quad D=[d_{k-\ell+1},\ldots, d_{k}].    
\end{align*}
Splitting the averaging variables in~\eqref{eq:goal2d} into residue classes $\pmod{DQ'}$, it suffices to show that for any $1\leq a_1,a_2\leq DQ'$ we have 
\begin{align}\label{eq:goal2e}\begin{split}
&\mathbb{E}_{n\in I}\mathbb{E}_{m\leq N/(DQ')}1_{n+P_1(DQ'm+a_2)\equiv a_1+P_1(a_2)\pmod{DQ'}}\\
&\quad \quad \cdot\theta_1(m)\theta_2(n)\prod_{j=1}^{k-\ell-1}\Lambda(n+P_j(DQ'm+a_2))E_{k-\ell}(n+P_{k-\ell}(DQ'm+a_2))\\
&\quad \quad \ll_A \frac{(\log N)^{-A}}{DQ'}.
\end{split}
\end{align}
By splitting the $n$ average into shorter segments if necessary, it suffices to show that for all $1\leq a_1,a_2\leq DQ'$ and $x\in [N^d]$ we have
\begin{align}\label{eq:goal2eb}\begin{split}
&\mathbb{E}_{n\leq (N/(DQ'))^d}\mathbb{E}_{m\leq N/(DQ')}1_{x+n+P_1(DQ'm+a_2)\equiv a_1+P_1(a_2)\pmod{DQ'}}\\
&\quad \quad \cdot\theta_1(m)\theta_2(n)\prod_{j=1}^{k-\ell-1}\Lambda(x+n+P_j(DQ'm+a_2))E_{k-\ell}(x+n+P_{k-\ell}(DQ'm+a_2))\\
&\quad \quad \ll_A \frac{(\log N)^{-A}}{DQ'}.
\end{split}
\end{align}

We now apply Proposition~\ref{prop:GVNT3} to the collection of $k-\ell$ polynomials $(P_1(DQ'y+a_2),\ldots,P_{k-\ell}(DQ'y+a_2))$. Let $m_1,\ldots, m_{k-\ell}$ be the leading coefficients of these polynomials. Then  $m_i\asymp_{P_1,\ldots, P_k} (DQ')^{d}$ for all $1\leq i\leq k-\ell$. In particular, this means that $m_i\asymp_{P_1,\ldots, P_k}c_{k-\ell}$ for all $1\leq i\leq k-\ell$. In addition, by the assumption on $P_i$, we know that the $m_i$ are distinct. 

Now, by Proposition~\ref{prop:GVNT3} (and the bounds $\Lambda(n)\ll \log |n|$ and~\eqref{eq:Ebound}) we see that~\eqref{eq:goal2eb} holds unless
\begin{align}\label{eq:Ek-ell}
\|E_{k-\ell}(x+\cdot)\|_{U^s[-C(N/(DQ'))^d, C(N/(DQ'))^d]}\gg  (\log N)^{-O_{A,d,k}(1)}(DQ')^{-O_{d,k}(1)}  
\end{align}
for some $1\leq s\ll_{d,k}1$ and $1\leq C\ll_{P_1,\ldots, P_k}1$. By a change of variables, the previous inequality implies that
\begin{align*}
\|E_{k-\ell}\cdot 1_{[-x-C(N/(DQ'))^d,-x+C(N/(DQ'))^d]}\|_{U^s[-CN^d,CN^d]}\gg  (\log N)^{-O_{A,d,k}(1)}(DQ')^{-O_{d,k}(1)-\frac{s+1}{2^s}}.  
\end{align*}
Let $B=B(d,k)$ be large. Using Vinogradov's Fourier expansion (Lemma~\ref{le:vinogradov}) to approximate $1_{[-x,C(N/(DQ'))^d-x]}$ by a trigonometric polynomial with $O((DQ')^{B})$ terms and with coefficients bounded by $O(1)$, we conclude that 
\begin{align*}
\|E_{k-\ell}\|_{U^s[-CN^d,CN^d]}\gg  (\log N)^{-O_{A,d,k}(1)}(DQ')^{-O_{d,k}(1)}.  
\end{align*}
Since $E_{k-\ell}$ is an even function, this yields 
\begin{align*}
\|E_{k-\ell}\|_{U^s[CN^d]}\gg  (\log N)^{-O_{A,d,k}(1)}(DQ')^{-O_{d,k}(1)}.  
\end{align*}
Let $c_k$ be the constant in Proposition~\ref{prop:leng2}. We may assume that $C=C(d,k)$ in~\eqref{eq:wjdef} is chosen large enough in terms of $c_k$. Then we have  
 $DQ'\ll w_{k-\ell+1}^{2r}<\exp((\log w_{k-\ell})^{c_k/2})$. 
Now, recalling that $E_{k-\ell}=\Lambda-\widetilde{\Lambda}_{W_{k-\ell}}$we obtain a contradiction with Proposition~\ref{prop:leng2}. Hence,~\eqref{eq:Ek-ell} cannot hold, so the proof is complete. 
\end{proof}

\bibliography{refs}
\bibliographystyle{plain}

\end{document}